\documentclass[20pt]{amsart}
\usepackage[latin1]{inputenc}
\usepackage{mathrsfs}
\usepackage{cite}
\usepackage{mathtools}
\usepackage{amsmath, amsthm, amsfonts, amssymb}
\usepackage{mathtools}
\usepackage{enumitem}
\usepackage{tikz}
\usepackage{tikz-cd}
\usepackage[all]{xy}

\usepackage{amsthm}
\theoremstyle{plain}
\newtheorem{thm}{Theorem}[section]

\newtheorem{cor}[thm]{Corollary}
\newtheorem{lemma}[thm]{Lemma}
\newtheorem{rmk}[thm]{Remark}

\theoremstyle{definition}
\newtheorem{example}[thm]{Example}
\newtheorem{defn}[thm]{Definition}

 \usepackage{todonotes}

\newcommand{\CHM}{\mathrm{\CHM}}

\usepackage[
urlcolor=blue,
colorlinks=true,
linkcolor=blue,
citecolor=blue,
]{hyperref}

\begin{document}
\title{Bound of automorphisms of fibred surfaces in positive characteristic  \,\, }
\author{Xiaokun Zhong}
	\address{School of Mathematical Sciences, Xiamen University, Xiamen 361005, P. R. China.}
	\email{xiaokunzhong@163.com}

	\makeatletter \@namedef{subjclassname@2020}{\textup{2020} Mathematics Subject Classification} 
     \makeatother
      \subjclass[2020]{14G17, 14J50, 14D06, 14J29.}
	\keywords{Positive characteristic, automorphisms of surfaces, fibrations, surfaces of general type.}
	\thanks{The author was supported in part by NSFC (No.11971399).}
	

\begin{abstract}
{
Let $f \colon S \rightarrow B$ be a fibred surface over an algebraically closed field $k$ of characteristic $p \geq 5$, where $S$ is a minimal smooth projective surface of general type and $B$ is a smooth projective curve of genus $b\geq 2$. We prove that the group of fibration-preserving automorphisms of $f$ has order at most $15658(K_S^2)^{4}$. Furthermore, we provide an example to show that the exponent 4 of the polynomial bound is sharp.
}	
\end{abstract}

\maketitle
\vspace*{6pt}
\tableofcontents  


\section{Introduction}
Let $k$ be an algebraically closed field of characteristic $p \geq 0$. When considering a minimal smooth  projective surface $S$ of general type over $k$, its automorphism group $\mathrm{Aut}_{k}(S)$ becomes a focal point of research. According to Matsumura, the order of $\mathrm{Aut}_{k}(S)$ is finite in \cite{s9}. Bounding the order of $\mathrm{Aut}_{k}(S)$ in terms of $K_S^2$ poses a natural problem.

When $k$ is the complex numbers $\Bbb{C}$, Xiao demonstrated that the order of $\mathrm{Aut}_{k}(S)$ is at most $(42)^2K_{S}^{2}$ for all surfaces $S$ of general type over $\Bbb{C}$ (see \cite{s15,Xiao1995}). Additionally, for a fibration $f\colon S \to B$ over a smooth projective curve of genus $b \geq 2$, and $G:=\{ (\sigma,\varphi) \in \mathrm{Aut}_{k}(S) \times \mathrm{Aut}_{k}(B) | f \circ \sigma =\varphi \circ f \}$ a subgroup of $\mathrm{Aut}_{k}(S)$, Xiao proved that the order of $G$ is at most $882K_S^2$ (refer to \cite{s16}, Proposition 1). 

However, in the case of $k$ having a positive characteristic, the order of $\mathrm{Aut}_{k}(S)$ can not be linearly bounded by $K_S^2$ (see Example 4.9). Finding polynomial bounds (in terms of $K_{S}^{2}$) for the order of $\mathrm{Aut}_{k}(S)$ is a challenging problem. This difficulty persists even for the subgroup $G$ of $\mathrm{Aut}_{k}(S)$.

In the case where $k$ is an algebraically closed field of positive characteristic $p$, the group $\mathrm{Aut}_{k}(S)$ may be not reduced. Fortunately, the order of $\mathrm{Aut}_{k}(S)_{\mathrm{red}}$ (the group scheme $\mathrm{Aut}_{k}(S)$ with reduced structure) is identical to that of $\mathrm{Aut}_{k}(S)$. Let 
\begin{equation}
G:=\{ (\sigma,\varphi) \in \mathrm{Aut}_{k}(S)_\mathrm{red} \times \mathrm{Aut}_{k}(B)\,|\,f \circ \sigma =\varphi \circ f \} 
\end{equation}
be the subgroup of $\mathrm{Aut}_{k}(S)_\mathrm{red}$. In this paper, we investigate the bound of the order of $G$. We provide some necessary preliminaries in Section 2, where we recapitulate the automorphisms group of a curve, the automorphisms group of a fibration, and some inequalities about the canonical divisor $K_S$.

The paper consists of two main parts. The first part, Section 3, considers a regular projective curve $C$ of arithmetic genus $g:=p_a(C) \geq 2$ over a non-perfect field $K$. Assuming that $C$ is geometrically integral and the order of $\mathrm{Aut}_{K}(C)$ is finite. We explore the group structure of $\mathrm{Aut}_{K}(C)$ based on the number of singular points of $C\times_K \overline{K}$, and provide the bound of the order of $\mathrm{Aut}_{K}(C)$ (see Lemma 3.3). The difficult situation arises when $C\times_K \overline{K}$ is a rational curve with a unique singular point. In such a situation, we obtain $\mathrm{Aut}_{K}(C) \cong H \rtimes M$, where $H$ is a Sylow $p$-group and $M$ is a cyclic group with order coprime to $p$. Generally, the $p$-subgroup of $\mathrm{Aut}_{K}(C)$ may be not trivial (see Example 3.9). Estimating the order of $H$ becomes difficult when $p$ divides $2g-2$. While under certain assumptions (see Section 3.2), we show that the order of $H$ has at most $g$.

The second part of this article, Section 4, provides the polynomial bounds for the order of $G$, which is crucial for bounding the subgroup 
\begin{equation}
\mathrm{Aut}_{B}(S):=\{ (\sigma,\varphi) \in G\,|\,\varphi =\mathrm{id}_B \}
\end{equation}
of $G$. Roughly speaking, if we replace $C$ in the first part by the generic fiber of $f$, since $S$ is a minimal smooth surface of general type, then $\mathrm{Aut}_{B}(S) \cong \mathrm{Aut}_{K}(C)$, where $K$ is the function field of $B$. Hence we may regard $\mathrm{Aut}_{K}(C)$ as a subgroup of $G$. In other words, we may regard both $H$ and $M$ as subgroups of $G$ (hence of  $\mathrm{Aut}_{k}(S)_{\mathrm{red}}$) in the difficult situation presented in the first part.

In \cite{s2}, Ballico provided polynomial bounds (in terms of $K_S^2$) for the order of the subgroup of $\mathrm{Aut}_{k}(S)_{\mathrm{red}}$ with order coprime to $p$, with $\frac{45}{2}$ as the exponent. Moreover, Cai showed that every abelian subgroup of $\mathrm{Aut}_{k}(S)_{\mathrm{red}}$ with order coprime to $p$ has order at most $624K_{S}^{2}+6708$ in \cite{s4}. Ballico also investigated the Sylow $p$-subgroups of $\mathrm{Aut}_{k}(S)_{\mathrm{red}}$ in \cite{s3}. 

We briefly revisit Ballico's method in \cite{s3}. Let $F$ be a general fiber of $f$. He considered the line bundle $K_S |_F$ on $F$ (see \cite{s3}, the second Remark 1.5). Since $S$ is a minimal smooth surface of general type,  $\alpha K_S|_F$ is an ample line bundle on $F$ for a sufficiently large integer $\alpha$. Returning to the difficult situation presented in the first part, he claimed that $F$ has no automorphism of order $p$ fixing the isomorphism class of $\alpha K_S |_F$ (refer to \cite{s3}, Remark 1.3). Note that $H$ fixes the isomorphism class of $\alpha K_S |_F$, and $F$ is a general fiber, thus $H$ is trivial. However, his method may not be suitable for the situation where $p$ divides $2p_a(F)-2$ (see Example 3.9).

We summarize the main result of the paper as follows:

\begin{thm}[see Corollary 4.8 and Example 4.9]
Let $f\colon S \to B$ be a fibration over an algebraically closed field $k$ of characteristic $p \geq 5$, where $S$ is a minimal smooth projective surface of general type and $B$ is a smooth projective curve of genus $b\geq 2$. Let $G:=\{ (\sigma,\varphi) \in \mathrm{Aut}_{k}(S)_\mathrm{red} \times \mathrm{Aut}_{k}(B)\,|\,f \circ \sigma =\varphi \circ f \}$. Then the order of $G$ has at most $15658(K_{S}^{2})^{4}$, and the exponent $4$ of the polynomial bound $($in terms of $K_{S}^{2}$$)$ is sharp.
\end{thm}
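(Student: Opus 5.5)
The plan is to use the exact sequence
\[
1 \to \mathrm{Aut}_B(S) \to G \to \mathrm{Aut}_k(B),
\]
so that $|G| \le |\mathrm{Aut}_B(S)|\cdot |\Gamma|$, where $\Gamma$ is the image of $G$ in $\mathrm{Aut}_k(B)$. Each factor will be bounded separately in terms of $K_S^2$, and the product should give exponent $4$.

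\textbf{Base factor.} The group $\Gamma$ preserves the set of points of $B$ over which the fibre is singular. The first attempt is the quotient $B \to B/\Gamma$ together with Riemann--Hurwitz, using Stichtenoth-type bounds in characteristic $p$. These give $|\Gamma| \le 16 b^4$ in general, and $84(b-1)$ when the action is tame or $p$ is large.

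Next I would bound $b$ by $K_S^2$. Since $S$ is minimal of general type with $b \ge 2$, Section 2 should give $K_S^2 \ge 8(b-1)(g-1)$, or at least $K_{S/B}^2 \ge 0$ and $K_S^2 \ge 8(g-1)(b-1)$ with $g = p_a(F) \ge 2$, also in characteristic $p$. This yields $b - 1 \le K_S^2/8$ and $g - 1 \le K_S^2/(8(b-1))$. The difficulty is that the $b^4$ bound alone would already consume the whole exponent. So I would split: if the action of $\Gamma$ on $B$ is tame, use $84(b-1)$; only in the wild case accept a degree-$4$ contribution. Even then I hope to balance it against a smaller fibre contribution.

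\textbf{Fibre factor.} By Section 3, $\mathrm{Aut}_B(S) \cong \mathrm{Aut}_K(C)$, where $C$ is the generic fibre, and Lemma 3.3 bounds this group according to the singularities of $C_{\overline K}$. In the easy cases (smooth geometric generic fibre, or several singular points) the bound is polynomial in $g$, roughly $16g^4$ or better. In the difficult case of a rational curve with a unique singular point, we have $\mathrm{Aut}_K(C) \cong H \rtimes M$ with $M$ cyclic and of order linear in $g$. The Section 3.2 estimate then gives $|H| \le g$, hence a bound quadratic in $g$. Translating via $g - 1 \le K_S^2/(8(b-1))$, the exponents in $g$ and $b$ combine so that each mixed term $b^i g^j$ with $i + j \le 4$ becomes at most $c\,(K_S^2)^4$. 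Collecting the worst constants across the cases gives $15658$.

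\textbf{Main obstacle and sharpness.} The hardest step is the wild $p$-part of $\mathrm{Aut}_K(C)$ when $p \mid 2g - 2$, where the Section 3.2 hypotheses may fail. Here I would use that $H$ acts faithfully on a fibre near the unique cusp, together with $p \ge 5$, to bound $|H|$ by the degree of $K_S|_F$.

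For sharpness, I would take $S$ to be a product or quotient $C_1 \times C_2$ with $C_1$ a Hermitian or Artin--Schreier curve, for example $y^q + y = x^{q+1}$. Such a curve has roughly $q^8 \sim g^4$ automorphisms while $K_S^2 = 8(g_1 - 1)(g_2 - 1)$ stays linear in $g_1$. This makes $|G| \sim (K_S^2)^4$, as in Example 4.9.
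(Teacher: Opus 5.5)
Your skeleton matches the paper's: the sequence $1\to\mathrm{Aut}_B(S)\to G\to\mathrm{Aut}_k(B)$, a quartic bound on the base, the case split of Lemma 3.3 for $\mathrm{Aut}_B(S)\cong\mathrm{Aut}_K(C)$, and a product with a Hermitian factor for sharpness. Your sharpness variant, with $C_2$ fixed and $C_1$ Hermitian of growing genus, works as well as Example 4.9.

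The genuine gap is the step you flag as the main obstacle, and the method you propose for it cannot work. Take Example 3.9, or $\widetilde S_1\to B_1$ with $p\mid r+1$ (equivalently $p\mid 2g-2$). A general closed fibre $F$ has normalization $\Bbb P^1$ and a cusp with local ring $k[[t^p,t^r]]$, where $r=\lambda p-1$. Every map $t\mapsto t/(1+ct)$ with $c\in k$ preserves this ring, since $t^r(1+ct)^{-r}=(t^r+ct^{\lambda p})(1+c^pt^p)^{-\lambda}$. So $\mathrm{Aut}_k(F)\supseteq\mathbb{G}_a(k)$ contains $p$-groups of every order. Moreover $K_S|_F\cong\omega_F$ by adjunction, so it is fixed by every automorphism of $F$. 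Hence no argument using only a faithful action on one closed fibre together with $K_S|_F$ can bound $|H|$. This is exactly the failure of Ballico's method described in the introduction.

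What is missing is an argument on the generic fibre over the non-perfect field $K$:
\begin{enumerate}
\item The Hurwitz formula for $C\to C/H$ (Lemma 3.4) gives $|H|\le p(2g-2)$, unless $p\mid 2g-2$ and $C$ has infinitely many $K$-points.
\item In that residual case, pass through the Frobenius tower of Theorem 4.1 to the normal form $y^p=bx^r+\sum b_{l_i}x^{l_ip}$ with $b_{l_i}\notin K^p$.
\item Determine the order-$p$ automorphisms explicitly (Lemma 3.6). Then use $b_{l_i}\notin K^p$, a Vandermonde argument and the finiteness of $\mathrm{Aut}_K(C)$ to get $|H|<g$ (Lemmas 3.7, 3.8).
\item Tate's genus change gives $(p-1)\mid 2g$, hence $p\le 2g+1$ and $|H|\le 2(g-1)(2g+1)$.
\end{enumerate}
Combined with Cai's bound $|M|\le 8g+4$, this gives $|\mathrm{Aut}_B(S)|\le 8(g-1)(2g+1)^2$. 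You also need Cai's bound in case (iii), where Lemma 3.3 only bounds an index.

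There are two secondary problems.

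First, $K_S^2\ge 8(g-1)(b-1)$, i.e.\ $K_{S/B}^2\ge 0$, is an Arakelov-type positivity statement. It is not available in positive characteristic in this generality, in particular not for a non-smooth generic fibre, which is the hard case, and Section 2 does not give it. The paper instead proves Lemma 2.7, using $\chi(\mathcal O_S)\ge K_S^2/32$ and a Harder--Narasimhan argument on Frobenius pullbacks of $f_*\omega_{S/B}$. This yields $gb\le\frac{31}{24}K_S^2$, $b(g-1)<\frac{15}{16}K_S^2$ and $b(2g+1)\le\frac{155}{48}K_S^2$. The constant $15658$ is $5625\,(31/24)^4$ rounded up, coming from case (i), where $|G|<75g^4\cdot 75b^4$.

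Second, your exponent bookkeeping is off, and that is why you think the base needs a tame/wild split. The bound is (fibre bound of degree at most $4$ in $g$)$\,\times\,|\mathrm{Aut}_k(B)|$, with $|\mathrm{Aut}_k(B)|<75b^4$. It is not $16b^4$, because of the Hermitian exception in Theorem 2.4. So the relevant terms are $g^jb^4$ with $j\le 4$, not $b^ig^j$ with $i+j\le 4$. These terms are harmless because $K_S^2$ bounds the product $gb$, so the full quartic base bound costs nothing extra.
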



\section{Preliminaries}

In this section, let $k$ be an algebraically closed field of characteristic $p>0$. Let $S$ be a minimal smooth projective surface of general type, and $B$ a smooth projective curve of genus $b\geq 2$ over $k$.

\subsection{Automorphisms group of a curve}

Let $\mathrm{GL}_{2}(k)$ be the group of invertible $2 \times 2$ matrices with entries in $k$. We write $\mathrm{PGL}_{2}(k):=\mathrm{GL}_{2}(k)/k^{*}$. It is known that $\mathrm{PGL}_{2}(k) \cong \mathrm{Aut}_{k}(\Bbb P^1_{k})$. For a given finite group $H$, we use $|H|$ to denote its order.

\begin{thm}[see \cite{s5}, Lemma 3.2]
Let $s \in \mathrm{PGL}_{2}(k)$ be nontrivial and of finite order. Then $s$ has a unique fixed point on $\Bbb P^1_{k}$ if and only if $s^{p}=\mathrm{id}$.
\end{thm}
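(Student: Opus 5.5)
The plan is to lift $s$ to a matrix $A\in\mathrm{GL}_2(k)$ and read off its fixed points from its eigenvectors. A point $[v]\in\Bbb P^1_k$ is fixed by $s$ exactly when $v$ is an eigenvector of $A$. Since $k$ is algebraically closed, $A$ is conjugate in $\mathrm{GL}_2(k)$ to its Jordan form, and conjugation does not change the number of fixed points or the order of $s$. So we reduce to two cases.

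\emph{Case 1: $A$ is diagonalizable.} We may take $A=\mathrm{diag}(a,b)$. If $a=b$, then $A$ is scalar and $s=\mathrm{id}$, which is excluded. So $a\neq b$. Then the eigenspaces are exactly the two coordinate lines, and $s$ fixes exactly the two points $0$ and $\infty$, so it has no unique fixed point. It remains to check that $s^p\neq\mathrm{id}$ here. The condition $s^p=\mathrm{id}$ means $A^p$ is scalar, i.e. $a^p=b^p$. Since $x\mapsto x^p$ is injective on $k$ in characteristic $p$, this forces $a=b$, a contradiction. Hence in this case both sides of the equivalence fail.

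\emph{Case 2: $A$ is not diagonalizable.} Then $A=\begin{pmatrix} a&1\\0&a\end{pmatrix}$ with $a\neq 0$. The only eigenline is $k e_1$, so $s$ has the unique fixed point $\infty$ (acting as $z\mapsto z+a^{-1}$ after scaling). Moreover $A=a(I+N)$ with $N^2=0$, so $A^p=a^p(I+pN)=a^pI$. Thus $s^p=\mathrm{id}$, and in this case both sides of the equivalence hold.

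Combining the two cases gives the equivalence. Finite order is used only to guarantee that $s$ is a genuine element of finite order; it is not needed beyond that, since the dichotomy is purely linear-algebraic. There is no real obstacle. The only points needing care are that conjugation preserves both properties, and the injectivity of Frobenius in Case 1.
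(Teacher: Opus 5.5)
Your proof is correct and complete. The Jordan-form dichotomy settles both directions. A non-scalar diagonalizable $A$ has exactly two fixed points, and $A^p=\mathrm{diag}(a^p,b^p)$ is non-scalar by injectivity of Frobenius. A single Jordan block has the unique fixed point $\infty$, and $A^p=a^p(I+N)^p=a^pI$. You are also right that the finite-order hypothesis is never actually used.

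The paper itself gives no argument for this statement and only quotes it from \cite{s5}. Your proof supplies the standard elementary verification behind that citation.
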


\begin{thm}[see \cite{s5}, Proposition 4.7]
Let $G$ be a finite subgroup of $\mathrm{PGL}_2(k)$. Then $G$ fixes a unique point on $\Bbb P^1_{k}$ if and only if $G$ is isomorphic to the semidirect product $H \rtimes M$, where $H$ is a nontrivial Sylow $p$-subgroup of $G$, $M$ is a cyclic subgroup of $G$, and $(p,\,|M|)=1$.
\end{thm}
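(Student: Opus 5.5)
The plan is to prove both directions by conjugating the fixed point to $\infty$ and using Theorem 2.1 (the \cite{s5} Lemma 3.2 result above) to pin down the $p$-elements.

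\emph{($\Leftarrow$).} Suppose $G \cong H \rtimes M$ with $H$ a nontrivial Sylow $p$-subgroup. Every nontrivial $h \in H$ has $p$-power order. Some power $h' = h^{p^j}$ has order exactly $p$, so $h'^p=\mathrm{id}$, and by Theorem 2.1 it has a unique fixed point. Since any fixed point of $h$ is fixed by $h'$, the element $h$ has at most one fixed point. A nontrivial element of $\mathrm{PGL}_2(k)$ over an algebraically closed field always has a fixed point (an eigenvector), so $h$ has exactly one. We then show that all elements of $H$ share this fixed point.
\begin{itemize}
\item Conjugate so that some order-$p$ element $h_0\in Z(H)$ fixes $\infty$. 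Its lift is a nondiagonalizable matrix with a single eigenvalue, hence conjugate to $z\mapsto z+1$.
\item Every $h\in H$ commutes with $h_0$, so $h$ permutes the fixed points of $h_0$ and therefore fixes $\infty$.
\end{itemize}
So $\infty$ is a common fixed point of $H$. Since $H$ is normal in $G$, for $g\in G$ the point $g(\infty)$ is fixed by $gHg^{-1}=H$. A nontrivial $h$ has only one fixed point, so $g(\infty)=\infty$. Thus $G$ fixes $\infty$, and this is the unique common fixed point because the nontrivial elements of $H$ fix nothing else.

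\emph{($\Rightarrow$).} Suppose $G$ fixes a unique point; move it to $\infty$. Then $G$ lies in the affine group $\{z\mapsto az+b\}\cong k \rtimes k^{*}$. Restricting the map $z\mapsto az+b \mapsto a$ gives a homomorphism $G\to k^{*}$.
\begin{itemize}
\item Its image is a finite subgroup of $k^{*}$, hence cyclic of order prime to $p$.
\item Its kernel $H$ consists of translations. It is a finite subgroup of $(k,+)$, hence elementary abelian of exponent $p$.
\end{itemize}
So $H$ is a normal Sylow $p$-subgroup. By Schur--Zassenhaus, $G\cong H\rtimes M$ with $M$ cyclic and $(p,|M|)=1$.

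It remains to see that $H$ is nontrivial. If $H$ were trivial, $G$ would be cyclic of order prime to $p$, generated by $z\mapsto az+b$ with $a\neq 1$. That map is conjugate to $z\mapsto az$, which fixes two points, $0$ and $\infty$ after conjugation, contradicting uniqueness. (If $G$ is trivial, every point is fixed, so uniqueness again fails.)

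The step I expect to need the most care is the $(\Leftarrow)$ claim that all of $H$ shares a fixed point. The centre argument above handles it, provided one uses that a nontrivial $p$-element has exactly one fixed point. Everything else is a standard computation in the affine group.
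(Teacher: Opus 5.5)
Your proof is correct. The paper does not prove Theorem 2.2 itself; it simply quotes it from \cite{s5}. Your argument is therefore a self-contained replacement for that citation, along the standard lines.
\begin{itemize}
\item For ($\Rightarrow$) you put the fixed point at $\infty$ and take $H$ to be the kernel of $z\mapsto az+b \mapsto a$ on the affine group $k\rtimes k^{*}$.
\item For ($\Leftarrow$) you use an order-$p$ element of $Z(H)$ to get a common fixed point of $H$, which normality of $H$ then forces $G$ to fix.
\end{itemize}

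Two small remarks.
\begin{itemize}
\item You can avoid Schur--Zassenhaus by writing down a complement. Take $g\colon z\mapsto az+b$ in $G$ whose multiplier $a\neq 1$ generates the image in $k^{*}$. Then $g$ also fixes $b/(1-a)$, so it is conjugate to $z\mapsto az$ and has order exactly the order of $a$. Hence $M=\langle g\rangle$ is a complement to $H$.
\item Your ($\Leftarrow$) direction uses only that $H$ is a nontrivial normal $p$-subgroup; neither the cyclicity of $M$ nor the Sylow property is needed. Combined with ($\Rightarrow$), this also shows that $H$ is conjugate to a group of translations, hence elementary abelian of exponent $p$. The citation alone does not make this extra structure visible.
\end{itemize}
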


\begin{cor}
Let $G$ be a finite subgroup of $\mathrm{PGL}_2(k)$. If $G$ fixes at least two points on $\Bbb P^1_{k}$, then $G$ is a cyclic group and $(p,\,|G|)=1$.
\end{cor}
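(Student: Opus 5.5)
The plan is to derive the corollary directly from the two preceding results on finite subgroups of $\mathrm{PGL}_2(k)$. Let $G$ fix two distinct points $P, Q \in \Bbb P^1_k$. We aim to show in turn that $G$ has no elements of order $p$, that $G$ is cyclic, and then conclude.

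\textbf{No elements of order $p$.} Suppose $s \in G$ is nontrivial with $s^p=\mathrm{id}$. Then by the first theorem above (\cite{s5}, Lemma 3.2), $s$ has a unique fixed point on $\Bbb P^1_k$. But $s$ fixes both $P$ and $Q$, a contradiction. Hence $G$ contains no element of order $p$. By Cauchy's theorem this gives $(p,|G|)=1$.

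\textbf{Cyclicity.} Change coordinates in $\mathrm{PGL}_2(k)$ so that $P=0$ and $Q=\infty$. Any element of $\mathrm{PGL}_2(k)$ fixing $0$ and $\infty$ is represented by a diagonal matrix. It therefore acts as $z\mapsto az$ with $a\in k^*$. The map sending such an element to $a$ is an injective group homomorphism from the stabilizer of $\{0,\infty\}$ (pointwise) into $k^*$. So $G$ embeds into the multiplicative group $k^*$. A finite subgroup of the multiplicative group of a field is cyclic, so $G$ is cyclic. This also recovers the coprimality independently, since $k^*$ has no nontrivial $p$-torsion in characteristic $p$.

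\textbf{Alternative route and main obstacle.} One could instead argue via the second theorem (\cite{s5}, Proposition 4.7): if $p$ divided $|G|$, then $G$ would have a nontrivial Sylow $p$-subgroup. Each of its nontrivial elements would have a unique fixed point, contradicting the existence of two fixed points. The diagonalization argument is more direct, though. The only step needing care is the justification that an element of $\mathrm{PGL}_2(k)$ fixing $0$ and $\infty$ is diagonal. This is immediate: $\begin{pmatrix} a&b\\ c&d\end{pmatrix}$ fixes $0$ iff $b=0$, and fixes $\infty$ iff $c=0$. So no real obstacle is expected.
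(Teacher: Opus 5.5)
Your proof is correct. The coprimality step is the same as the paper's: you apply Theorem 2.1 to elements of $G$. Your use of Cauchy's theorem actually makes it slightly cleaner. Theorem 2.1 only excludes nontrivial $s$ with $s^p=\mathrm{id}$, so the paper's assertion that ``the order of $s$ is coprime to $p$'' tacitly requires passing to a power of $s$ of order $p$, followed by Cauchy to reach $(p,|G|)=1$.

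For cyclicity the routes differ. The paper simply cites Proposition 4.5 of \cite{s5}. You instead conjugate the two fixed points to $0$ and $\infty$, observe that the pointwise stabilizer is $\{z\mapsto az\}\cong k^*$, and use that finite subgroups of $k^*$ are cyclic.

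Your argument is self-contained and elementary, and it yields both conclusions at once. As you note, $k^*$ has no nontrivial $p$-torsion, since $x^p=1$ forces $(x-1)^p=0$. So Theorem 2.1 is not even needed. The paper's version is shorter on the page but leans on an external classification result.
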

\begin{proof}
If $G$ fixes at least two points on $\Bbb P^1_{k}$, then $s$ fixes at least two points on $\Bbb P^1_{k}$ for any $s \in G$. By Theorem 2.1, the order of $s$ is coprime to $p$. Therefore, $(p,\,|G|)=1$. Furthermore, $G$ is a cyclic group (see \cite{s5}, Proposition 4.5).  
\end{proof}

\begin{thm}[see \cite{s13}, Hauptsatz]
If $C$ is a smooth projective curve of genus $g \geq 2$ over $k$, then $|\mathrm{Aut}_{k}(C)|<16 \cdot g^{4}$, except in the case where the function field $K(C)$ is $k(x,y)$ with relation
\begin{equation}
y^{p^n}+y=x^{{p^{n}+1}}, \ \ p^{n} \geq 3.
\end{equation}
In this exceptional case, one has $g=\frac{1}{2}\cdot p^{n}(p^n-1)$ and 
\begin{equation}
|\mathrm{Aut}_{k}(C)|=p^{3n}(p^{3n}+1)(p^{2n}-1).
\end{equation}
\end{thm}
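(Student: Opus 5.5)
The plan is to follow Stichtenoth's original strategy: apply the Riemann--Hurwitz formula to the quotient by $G:=\mathrm{Aut}_k(C)$, and bound the wild ramification using the filtration by higher ramification groups. First, $G$ is finite (Schmid--Iwasawa), so we may form $\pi\colon C\to C/G$, with $g_0$ the genus of $C/G$. For a point $P$, let $G_P=G_0(P)\supseteq G_1(P)\supseteq G_2(P)\supseteq\cdots$ be the ramification filtration. Here $G_1(P)$ is the Sylow $p$-subgroup of $G_P$, and $G_0(P)/G_1(P)$ is cyclic of order prime to $p$; this matches the shape $H\rtimes M$ of Theorem 2.2. Hilbert's different formula gives $d_P=\sum_{i\ge 0}(|G_i(P)|-1)$, so
\begin{equation*}
2g-2=|G|(2g_0-2)+\sum_{Q\in C/G}\frac{|G|}{|G_{P_Q}|}\,d_{P_Q}.
\end{equation*}

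Next comes a case analysis on $g_0$ and on the branch data, in the manner of Hurwitz. If $g_0\ge 1$, or if $g_0=0$ and there are at least three branch points, or two branch points of which at least one is tame, the usual estimates still work once $d_P\ge 2(|G_P|-1)$ is used at wild points. They give a linear bound, roughly $|G|\le 84(g-1)$, or a bound of order $g^2$ at worst, which is below $16g^4$. The remaining cases are $g_0=0$ with exactly one branch point, which is wild, or with two branch points, exactly one of which is wild. Then the orbit of the wild point $P$ has size $|G|/|G_P|$, and the formula forces that orbit to be small relative to $|G_P|$. The problem therefore reduces to bounding $|G_P|=|G_1(P)|\cdot|G_0(P)/G_1(P)|$ by something like $16g^4$.

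To bound the two factors, consider $C\to C/G_1(P)$. Here $P$ is totally wildly ramified, $C/G_1(P)$ is rational in the critical case, and $G_1$ acts with $P$ as its only ramified point. Riemann--Hurwitz then gives $2g-2=-2|G_1|+d_P$. Write $d_P=\sum_i(|G_i|-1)$ and use the jumps: elements of $G_i\setminus G_{i+1}$ contribute, and $|G_1/G_2|$ is at most about the number of lower jumps. This yields Stichtenoth's inequality $|G_1(P)|\le \frac{4p}{(p-1)^2}g^2$, with the extremal case $|G_2|$ large. Separately, the cyclic group $G_0/G_1$ embeds into the units of the completed local ring, acting on $\mathfrak m_P^i/\mathfrak m_P^{i+1}$. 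A Riemann--Hurwitz count on the cyclic cover $C/G_1\to C/G_0$ then bounds its order by roughly $4g+2$. Multiplying the two bounds, and using the small orbit factor in the two-branch-point case, gives $|G|<16g^4$ except when these estimates are nearly sharp.

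The main obstacle is the equality analysis. One must show that near-equality forces $G_1(P)$ to be elementary abelian of order $q=p^n$, with $C/G_2$ rational and a single jump. In that situation Artin--Schreier theory gives a function field $k(x,y)$ with $y^q+y=f(x)$, and the bound on $G_0/G_1$ forces $f(x)=x^{q+1}$. This is the Hermitian curve, for which $g=q(q-1)/2$ and $G\cong\mathrm{PGU}_3(q)$ has order $q^3(q^3+1)(q^2-1)$. I would handle this step first by computing the Weierstrass semigroup at $P$, generated by $q$ and $q+1$, and then identifying the curve from that data.
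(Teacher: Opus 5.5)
The paper does not prove this statement; it quotes it from Stichtenoth's Hauptsatz. Your outline therefore has to stand on its own as a reconstruction, and its decisive step, the critical case, does not work.

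\textbf{The critical case.} Take $g_0=0$ with one wild orbit $O\ni P$ and one tame orbit with stabilizer of order $e_1$. Hurwitz gives $|O|=(2g-2)/(d_P-e_P-e_P/e_1)$, and the denominator can be a small fraction, so $O$ is not small. For the Hermitian curve the denominator is $(q-2)/(q^2-q+1)$ and $|O|=q^3+1\sim(2g)^{3/2}$. The formula alone only yields $|O|\le(2g-2)e_1$.

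Since $|G|=|O|\cdot|G_1(P)|\cdot|G_0(P)/G_1(P)|$, the claimed ``reduction to bounding $|G_P|$'' is a non sequitur. Multiplying $|G_1(P)|\le\frac{4p}{(p-1)^2}g^2$, $|G_0/G_1|\le 4g+2$ and this orbit bound gives only a bound of degree $5$ in $g$. So there is not yet any ``nearly sharp'' regime to analyse.

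What is missing is a global argument tying $|O|$ to the structure of $G_P$. For example, once $|G_1(P)|>\frac{p}{p-1}g$:
\begin{itemize}
\item $C/G_1(P)$ is rational with $P$ its only ramified point, whence $\sum_{i\ge2}(|G_i(P)|-1)=2g$;
\item $G_1(P)$ acts semiregularly on $O\setminus\{P\}$, so $|G_1(P)|$ divides $|O|-1$;
\item $G_P$ has at most one non-regular orbit on $C\setminus\{P\}$, and it has length $|G_1(P)|$.
\end{itemize}
Information of this kind must be combined with the Hurwitz relation $|G|/e_1=|O|(|G_1(P)|+2g-2)-(2g-2)$, and the case of small $|G_1(P)|$ handled separately, before any equality analysis makes sense.

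\textbf{The equality analysis.} Your description of the extremal case is also wrong. For the Hermitian curve, $G_1(P)$ has order $q^3$ and is non-abelian. The elementary abelian group of order $q$ with a single jump (at $q+1$) and rational quotient is $G_2(P)$. The semigroup $\langle q,q+1\rangle$ at one point does not characterize the curve: every $y^q+y=f(x)$ with $\deg f=q+1$ has it, and so does every smooth plane curve of degree $q+1$ at a point of total tangency. You would also still have to prove that the full automorphism group is exactly $\mathrm{PGU}_3(q)$.

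\textbf{The reduction to the critical cases.} This part also needs repair.
\begin{itemize}
\item The inequality $d_P\ge 2(|G_P|-1)$ holds only when $G_P$ is a $p$-group. In general Hilbert's formula gives only $d_P\ge|G_0(P)|+|G_1(P)|-2$. For the Hermitian curve, $d_P=q^5+q^2-q-2<2(q^5-q^3-1)$.
\item With the correct inequality, the branch type $(2,2,\text{wild})$ on $\mathbb{P}^1$ (here $p\ge3$) has $2g-2=|G|(d_P/e_P-1)$. This only gives $d_P/e_P-1\ge 1/(3m)$ with $m=|G_0(P)/G_1(P)|$, hence $|G|\le 6m(g-1)$, which already needs the bound on $m$.
\item Your case list is inconsistent. ``One tame plus one wild branch point'' appears both among the easy cases and among the critical ones, while ``two wild branch points'' appears in neither.
\item The bound on $m$ cannot come from Riemann--Hurwitz for $C/G_1\to C/G_0$. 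In the critical case that is a cover $\mathbb{P}^1\to\mathbb{P}^1$ and says nothing about $g$. It comes from Riemann--Hurwitz for $C\to C/M$, with $M$ a cyclic complement of $G_1(P)$ in $G_P$.
\item $|G_1/G_2|$ is not controlled by the number of jumps. For $y^2+y=x^{2^k+1}$ in characteristic $2$ it equals $2^{2k}$, with only two wild jumps. So Stichtenoth's bound on $|G_1(P)|$ has to be taken from his paper, not from your sketch.
\end{itemize}
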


\begin{cor}
If $C$ is a smooth projective curve of genus $g \geq 2$ over $k$, then $|\mathrm{Aut}_{k}(C)|<75 \cdot g^{4}$.
\end{cor}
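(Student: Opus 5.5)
The plan is to split according to Theorem 2.4 (Stichtenoth's Hauptsatz). If the function field of $C$ is not of the exceptional Hermitian type, then Theorem 2.4 gives $|\mathrm{Aut}_{k}(C)|<16 g^{4}<75 g^{4}$ at once. So the whole content of the corollary is the exceptional case. There $K(C)=k(x,y)$ with $y^{q}+y=x^{q+1}$, where $q=p^{n}\geq 3$, so that
\begin{equation*}
g=\tfrac12 q(q-1), \qquad |\mathrm{Aut}_{k}(C)|=q^{3}(q^{3}+1)(q^{2}-1).
\end{equation*}
I will show directly that this number is less than $75g^4$.

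First I compare the two sides as polynomials in $q$. The right-hand side is
\begin{equation*}
75 g^{4}=\tfrac{75}{16}\,q^{4}(q-1)^{4}.
\end{equation*}
Dividing both sides by $q^{3}(q-1)$, the inequality to prove becomes
\begin{equation*}
(q^{3}+1)(q+1)<\tfrac{75}{16}\,q(q-1)^{3}.
\end{equation*}
Both sides have degree $4$ with leading coefficients $1$ and $75/16>1$. The inequality therefore holds for all large $q$, and the only task is to handle the small values of $q$.

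Since $q=p^{n}\geq 3$ is a prime power, I check the smallest admissible values directly.
\begin{itemize}
\item For $q=3$: $g=3$, so $75g^{4}=6075$, while $|\mathrm{Aut}_{k}(C)|=27\cdot 28\cdot 8=6048<6075$.
\item For $q=4$: $g=6$, so $75g^{4}=97200$, while $|\mathrm{Aut}_{k}(C)|=64\cdot 65\cdot 15=62400<97200$.
\end{itemize}
For $q\geq 5$, I use the crude bounds
\begin{equation*}
(q^{3}+1)(q+1)\leq \tfrac{126}{125}\cdot\tfrac{6}{5}\,q^{4}, \qquad q(q-1)^{3}\geq \bigl(\tfrac45\bigr)^{3} q^{4},
\end{equation*}
which reduce the claim to $\tfrac{126\cdot 6}{625}<\tfrac{75}{16}\cdot\tfrac{64}{125}$. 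This is $1.2096<2.4$, which is true.

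The main (and only) delicate point is the case $q=3$. There the ratio $|\mathrm{Aut}_{k}(C)|/g^{4}=6048/81\approx 74.7$, which explains why the constant $75$ is chosen. The computation at $q=3$ must be done exactly, and it must be checked that no smaller admissible $q$ occurs; $q=2$ is excluded by the hypothesis $p^{n}\geq 3$. Once $q=3$ is verified, the monotone estimate above covers every $q\geq 5$, and $q=4$ is the explicit check already made.
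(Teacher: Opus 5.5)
Your proof is correct and follows the paper's route. Like the paper, you use Theorem 2.4 to reduce to the Hermitian case and compare $q^{3}(q^{3}+1)(q^{2}-1)$ with $75g^{4}$ for $g=\frac12 q(q-1)$. The only difference is in the final estimate. The paper asserts without proof that $h(x)=\frac{(x^{3}+1)(x+1)}{x(x-1)^{3}}$ is decreasing on $[3,\infty)$, so the worst case is $q=3$, with ratio $224/3$. You instead check $q=3,4$ exactly and treat $q\geq 5$ with an elementary crude bound, which makes the argument self-contained.
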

\begin{proof} 
In the exceptional case of Theorem 2.4, we have $g=\frac{1}{2}\cdot p^{n}(p^n-1)$ and $|\mathrm{Aut}_{k}(C)|=p^{3n}(p^{3n}+1)(p^{2n}-1)$. Thus, $|\mathrm{Aut}_{k}(C)|=\frac{16 p^{3n}(p^{3n}+1)(p^{2n}-1)}{p^{4n}(p^n-1)^4}g^4=\frac{16 (p^{3n}+1)(p^{n}+1)}{p^{n}(p^n-1)^3}g^4$. Note that the function $h(x):=\frac{(x^3+1)(x+1)}{x(x-1)^3}$ is decreasing in the interval $[3, \infty)$, which implies that $|\mathrm{Aut}_{k}(C)|$ $ \leq {{224} \over {3}} \cdot g^{4}$, and equality holds if and only if $p^{n}=3$. Hence $|\mathrm{Aut}_{k}(C)| \leq {{224} \over {3}} \cdot g^{4}<75 \cdot g^{4}$.
\end{proof}

\begin{thm}[see \cite{s4}, Theorem 2.1]
If $C \subseteq S$ is an integral curve of arithmetic genus $g \geq 2$, then every finite abelian automorphism group $G \subseteq \mathrm{Aut}_{k}(C)$ with order prime to $p$ has order $|G| \leq 8g+4$.
\end{thm}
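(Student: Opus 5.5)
The statement (Theorem 2.6, quoted from Cai) concerns an integral curve $C\subseteq S$ of arithmetic genus $g\ge 2$, possibly singular, and a finite abelian group $G\subseteq \mathrm{Aut}_k(C)$ of order prime to $p$. The plan is to pass to the normalization and reduce to a tame Riemann--Hurwitz count.

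Let $\nu\colon \tilde C\to C$ be the normalization. Every automorphism of $C$ lifts uniquely to $\tilde C$, so $G$ acts faithfully on $\tilde C$. It also permutes the finite set $\nu^{-1}(\mathrm{Sing}\,C)$, and the local contributions $\delta_P$ to $g-\tilde g$ are $G$-invariant. Write $\tilde g=g(\tilde C)$, so that
\begin{equation*}
g=\tilde g+\sum_{P\in \mathrm{Sing}\,C}\delta_P .
\end{equation*}
Since $|G|$ is prime to $p$, the cover $\tilde C\to \tilde C/G$ is tame. Riemann--Hurwitz then reads
\begin{equation*}
2\tilde g-2=|G|\Bigl(2h-2+\sum_i\bigl(1-\tfrac1{e_i}\bigr)\Bigr),
\end{equation*}
with all stabilizers cyclic.

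I will split into cases according to $\tilde g$.

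\textbf{Case $\tilde g\ge 2$.} Use the classical tame bound for abelian groups, $|G|\le 4\tilde g+4$. This comes from the Riemann--Hurwitz case analysis: for abelian $G$ the signature must have enough branch points, because the product of the generators is trivial and each $e_i$ divides $\mathrm{lcm}$ of the others. Since $\tilde g\le g$, this gives $|G|\le 4g+4\le 8g+4$.

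\textbf{Case $\tilde g\le 1$.} Here the singularities must carry the genus. The group $G$ acts on the $G$-stable set $\Sigma=\nu^{-1}(\mathrm{Sing}\,C)$, which is nonempty. Take a $G$-orbit $O\subseteq \Sigma$ with stabilizer $G_x$, so that $|O|=[G:G_x]$.
\begin{itemize}
\item For $\tilde g=0$, the stabilizer $G_x$ fixes $x\in\mathbb P^1$. Since its order is prime to $p$, Corollary 2.3 and Theorem 2.1 show it is cyclic with exactly two fixed points.
\item For $\tilde g=1$, $G_x$ is cyclic of order at most $6$ as an automorphism group of an elliptic curve fixing a point, or $12$ in characteristic $2,3$; but here $p\nmid |G|$.
\end{itemize}
Each orbit of singular branches contributes at least about $|O|/2$ to $\sum\delta_P$, because $\delta_P\ge r_P-1$ where $r_P$ is the number of branches and $\delta_P\ge1$. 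Hence
\begin{equation*}
g\ge \tilde g+\tfrac12[G:G_x]-\text{(small correction)} .
\end{equation*}
Combining this with the bound on $|G_x|$ gives $|G|\le |G_x|\,(2g+c)$.

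For $\tilde g=0$ this has to be sharpened. An abelian $G\subset \mathrm{PGL}_2$ of tame order is cyclic or dihedral $D_2$-type, or of the form $\mathbb Z/n\times\mathbb Z/2$, and every cyclic subgroup has only two fixed points. So either a whole orbit is large, giving $|G|\le 2|O|$, or the singular branches lie at the two fixed points of a cyclic group. In the latter case the count $\delta$ against $|G|$ must be done via the ramification of $G$ at singular points: a singular point with one branch that is fixed by a cyclic group of order $n$ still has $\delta$ growing with $n$ when $\tilde C\to C$ is a nontrivial cusp. The target in this case is $|G|\le 8g+4$.

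\textbf{Main obstacle.} The case $\tilde g=0$ with $G$ cyclic fixing a unibranch singular point is the hard step. Here I would use that the $G$-action on the complete local ring $\hat{\mathcal O}_{C,P}\subset k[[t]]$ can be diagonalized, $t\mapsto\zeta t$, since the order is tame. The subring is then spanned by monomials in the semigroup $\Gamma$, with $\delta=\#(\mathbb N\setminus\Gamma)$. The embedding $C\subseteq S$ in a smooth surface forces embedding dimension $\le2$, so the singularity is planar. A planar $\mu_n$-invariant semigroup ring with a faithful $\mu_n$ action needs $\delta\gtrsim n/8$, which yields $n\le 8g+4$. This is where the hypothesis $C\subseteq S$ and the constant $8$ enter. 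I would verify the constant by analyzing $\Gamma=\langle a,b\rangle$, with $\delta=(a-1)(b-1)/2$, together with faithfulness requiring $\gcd$ conditions on $a,b$ modulo $n$.
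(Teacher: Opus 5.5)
The paper gives no proof of this statement; it is quoted from Cai, so there is no argument to compare against. Your sketch has a genuine gap, and it is exactly the step you call the main obstacle. The claim that a planar unibranch singularity with a faithful $\mu_n$-action must have $\delta\gtrsim n/8$ is false. For coprime $a,b$, the planar singularity $\widehat{\mathcal O}=k[[t^a,t^b]]$ has $\delta=\tfrac12(a-1)(b-1)$ and is stable under $t\mapsto\lambda t$ for \emph{every} $\lambda\in k^*$. Since $\gcd(a,b)=1$, the induced $\mu_n$-action is faithful for every $n$, so there are no congruence conditions on $a,b$ modulo $n$ to exploit.

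The statement as quoted is in fact false. Take $C\subset\mathbb P^2_k$ given by $y^p=x^{p-1}z$ with $p\ge 5$. It is the image of $t\mapsto(t^p:t^{p-1}:1)$, and its only singular point is the cusp $(0:0:1)$, with complete local ring $k[[t^{p-1},t^p]]$. Its arithmetic genus is $p_a(C)=\tfrac12(p-1)(p-2)\ge 6$. For each $\lambda\in k^*$, the linear map $(x:y:z)\mapsto(\lambda^p x:\lambda^{p-1}y:z)$ preserves $C$ and acts as $t\mapsto\lambda t$. Hence $k^*\subseteq\mathrm{Aut}_k(C)$, and $\mathrm{Aut}_k(C)$ contains cyclic groups of every order prime to $p$. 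After $y\mapsto y-\theta_0^{1/p}z$, this curve is a general fibre of the surface in Example 3.9, which the paper presents as a minimal surface of general type. So no refinement of your semigroup count can give $|G|\le 8g+4$, and neither planarity nor the embedding $C\subseteq S$ helps.

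So what is missing is a hypothesis, not an estimate: any correct version must use information beyond the abstract pair $(C,G)$. The paper applies this statement in Lemma 4.6(iii) to a general fibre $F$ of type $y^p=x^r$ from Theorem 4.1(iii). That is the same kind of curve as above, with $\mathbb G_m\subseteq\mathrm{Aut}_k(F)$. A bound on $|M|$ there therefore has to come from the action on the generic fibre $C_1$ over $K_1$, as in the computation of Lemma 3.6, not from the restriction of $M$ to one closed fibre.

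The rest of your sketch needs only small repairs:
\begin{itemize}
\item The case $\tilde g\ge 2$, via the tame abelian bound $4\tilde g+4$, is fine.
\item For $\tilde g=1$, your estimate $|G_x|(2g+c)$ only gives about $12g$. Use commutativity instead: a translation commutes with an element whose linear part is $\alpha\ne 1$ only if it lies in $\ker(1-\alpha)$, which has order at most $4$. This gives $|G|\le\max(2g-2,9)$.
\item The tame finite abelian subgroups of $\mathrm{PGL}_2$ are only cyclic groups and the Klein four-group; $\mathbb Z/n\times\mathbb Z/2$ with $n>2$ does not occur.
\end{itemize}
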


\subsection{Automorphisms group of a fibration}

We say $f\colon S \to B$ is a fibration if $f$ is a flat morphism and $f_{*}\mathcal{O}_{S}=\mathcal{O}_{B}$. It is known that the generic fiber of $f$ is geometrically integral. In fact, the generic fiber of $f$ is geometrically integral if and only if $K(B)$ is algebraically closed in $K(S)$ (see \cite{s1}, Lemma 7.2, Corollary 7.3 and \cite{s6}, Proposition 5.51). 

Suppose $K(B)$ is not  algebraically closed in $K(S)$. We consider the \textit{Stein factorization} $S\xrightarrow{h} B'\xrightarrow{\delta}B$ of $f$. Thus, $K(B')$ is algebraically closed in $K(S)$. Hence $\delta$ is not birational. This implies that $\mathrm{rank}\,\delta_{*}\mathcal{O}_{B'}>1$, which contradicts the fact that $\delta_{*}\mathcal{O}_{B'}=\delta_{*}h_*\mathcal{O}_S=\mathcal{O}_B$. Therefore, $K(B)$ is algebraically closed in $K(S)$.

Let $F$ be a general fiber of $f$ and $g:=p_a(F)$. 

Denoting $G:=\{ (\sigma,\varphi) \in \mathrm{Aut}_{k}(S)_{\mathrm{red}} \times \mathrm{Aut}_{k}(B) | f \circ \sigma =\varphi \circ f \}$
\begin{displaymath}
\xymatrix{
S\ar[r]^{\sigma} \ar[d]_{f} & S \ar[d]^{f}\\
B\ar[r]_{\varphi} & B 
}
\end{displaymath}
and $\mathrm{Aut}_{B}(S):=\{ (\sigma,\varphi) \in G$ $|$ $\varphi =id_{B} \}$.

We have the following exact sequences
\begin{equation}
1 \rightarrow \mathrm{Aut}_{B}(S) \rightarrow G \xrightarrow{u} \mathrm{Aut}_{k}(B),
\end{equation}
\begin{equation}
1 \rightarrow \mathrm{Aut}_{B}(S) \xrightarrow{v} \mathrm{Aut}_{k}(F),
\end{equation}
where $u(\sigma,\varphi):=\varphi$ and $v(\sigma,id):=\sigma|_{F}$.

According to (5), we have $|G| \leq |\mathrm{Aut}_B(S)| \cdot |\mathrm{Aut}_k(B)|$. Consequently, to bound the order of $G$, it suffices (by Corollary 2.5) to bound the order of $\mathrm{Aut}_B(S)$.

\subsection{Some inequalities about canonical divisor}

Let $f\colon S \to B$ be a fibration, and $K_S$ the canonical divisor of $S$. 

Let $K_{S/B}:=K_{S}-f^{*}K_B$ and $l(f):=\mathrm{dim}_{k}(R^{1}f_{*}\mathcal{O}_{S})_{\mathrm{tor}}$. By Leray spectral sequence, we have
\begin{equation}
\mathrm{deg}(f_{*}\omega _{S/B})= \chi(\mathcal{O}_S)-(g-1)(b-1)+l(f)\geq \chi(\mathcal{O}_S)-(g-1)(b-1).
\end{equation}
In \cite{s7}, Yi Gu, Xiaotao Sun and Mingshuo Zhou have given
\begin{equation}
\chi (\mathcal{O}_S) \geq \frac{1}{32} K_S^2>0.
\end{equation}

The slope of a vector bundle $E'$ is defined as $\mu(E'):=\mathrm{deg}(E')/\mathrm{rank}(E')$. Let $E:=f_{*} \omega _{S/B}$. By choosing a sufficiently large integer $k_{1}$, there is a $Harder$-$Narasimhan$ $filtration$ (see \cite{s7}, page 617)
\begin{equation}
0=E_0\subset E_1\subset \cdots \subset E_n:=F_{B}^{k_{1}*} E
\end{equation} 
such that $\mu_{1}> \cdots > \mu _{n} > \mu _{n+1}:=0$, where $F_{B}$ is the absolute Frobenius morphism of $B$, and $\mu_{i}:=\mu(E_{i}/E_{i-1})$ for $1\leq i \leq n$. Let $r_{i}:=\mathrm{rank}(E_{i})$. Then

\begin{equation}
\begin{aligned}
\mathrm{deg}E&=\frac{1}{p^{k_1}} deg(F_{B}^{k_{1}*} E)\\
&=\frac{1}{p^{k_1}} [r_{1}(\mu _{1}-\mu _{2})+r_{2}(\mu _{2}-\mu _{3})+\cdots+r_{n-1}(\mu _{n-1}-\mu _{n})+r_{n}\mu _{n}]\\
&\leq \frac{1}{p^{k_1}} [r_{n}(\mu _{1}-\mu _{2})+r_{n}(\mu _{2}-\mu _{3})+\cdots+r_{n}(\mu _{n-1}-\mu _{n})+r_{n}\mu _{n}]\\
&=\frac{1}{p^{k_1}} r_{n}\mu _{1}=\frac{g \mu _{1}}{p^{k_1}}
\end{aligned}
\end{equation} 
and there is an effective divisor $Z_1$ on $S$ such that $N_{1}:=p^{k_1}K_{S/B} - Z_{1} - \mu _{1}F$ is nef (see \cite{s14}), where $F$ is a general fiber of $f$.

\begin{lemma}
Let $f\colon S \to B$ be a fibration over an algebraically closed field $k$ of characteristic $p>0$, where $S$ is a minimal smooth projective surface of general type and $B$ is a smooth projective curve of genus $b\geq 2$. Let $F$ be a general fiber of $f$ and $g:=p_{a}(F)$. Then $K_S^2\geq 2(g-1)(b-1)(1+\frac{1}{g})(1+\frac{g-1}{15g+1})$. Moreover, $K_S^2\geq 4(g-1)(b-1)(1+\frac{1}{g})(1+\frac{g-1}{7g+1})$ if the general fiber $F$ is smooth.
\end{lemma}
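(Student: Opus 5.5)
The plan is to compare the canonical class with the Harder--Narasimhan data of $E=f_*\omega_{S/B}$ recalled above, and then use the Gu--Sun--Zhou inequality (7). Throughout put $x:=K_S^2$ and $A:=(g-1)(b-1)$. Since $f^*K_B\equiv(2b-2)F$ and $K_S\cdot F=K_{S/B}\cdot F=2g-2$, we have
\begin{equation*}
K_S\cdot K_{S/B}=x-(2b-2)(2g-2)=x-4A, \qquad K_{S/B}^2=x-8A .
\end{equation*}

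\textbf{First inequality.} Write $p^{k_1}K_{S/B}=N_1+Z_1+\mu_1F$, with $N_1$ nef and $Z_1$ effective as above. Since $S$ is minimal of general type, $K_S$ is nef. It also has nonnegative intersection with the nef class $N_1$ and with the effective divisor $Z_1$. Intersecting with $K_S$ therefore gives
\begin{equation*}
p^{k_1}(x-4A)=K_S\cdot(N_1+Z_1+\mu_1F)\ \geq\ \mu_1(2g-2).
\end{equation*}
By (9) we have $\mu_1/p^{k_1}\geq \deg E/g$. By (6) and (7) we have $\deg E\geq \chi(\mathcal O_S)-A\geq x/32-A$. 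Combining,
\begin{equation*}
x-4A\geq \frac{2(g-1)}{g}\Big(\frac{x}{32}-A\Big), \quad\text{i.e.}\quad x\Big(1-\frac{g-1}{16g}\Big)\geq \frac{2(g+1)}{g}A .
\end{equation*}
This rearranges to $x\geq \frac{32(g+1)}{15g+1}A$. A direct expansion shows this equals $2A(1+\frac1g)(1+\frac{g-1}{15g+1})$, which is the first claim.

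\textbf{Smooth general fibre.} Here I would run the same argument but intersect with $K_{S/B}$ instead of $K_S$, since this replaces $x-4A$ by the larger deficit $x-8A$ and gains strength. The input needed is that $K_{S/B}$ is nef. This should follow from the smoothness of the general fibre together with $g\geq 2$ and relative minimality, via the positive-characteristic analogue of Szpiro's/Arakelov's nefness of the relative dualizing sheaf; if only $f_*\omega_{S/B}$-type arguments are available, I would check that the cited result in \cite{s14,s7} covers this case.

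Granting nefness, the same steps give
\begin{equation*}
x-8A\geq \frac{2(g-1)}{g}\Big(\frac{x}{32}-A\Big),
\end{equation*}
hence $x\geq \frac{32(3g+1)}{15g+1}A$. It remains to compare this with the stated bound $4A(1+\frac1g)(1+\frac{g-1}{7g+1})=\frac{32(g+1)}{7g+1}A$. The difference of the cross-products is $(3g+1)(7g+1)-(g+1)(15g+1)=6g(g-1)\geq 0$, so our bound implies the stated one.

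\textbf{Main obstacle.} The delicate point is justifying nefness and the intersection-sign claims in positive characteristic. For $K_S$ this is immediate from minimality. For $K_{S/B}$ with smooth general fibre, it is where the hypothesis on $F$ is genuinely used, and it is the step I would verify most carefully against the literature.
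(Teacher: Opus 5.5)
Your first inequality is exactly the paper's argument: you intersect $p^{k_1}K_{S/B}=N_1+Z_1+\mu_1F$ with the nef class $K_S$ and then feed in (10), (6) and (7).

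For the smooth case you take a genuinely different route. The paper quotes the Gu--Sun--Zhou slope inequality $K_{S/B}^2\ge\frac{4g-4}{g}\deg E$ from \cite{s7} and substitutes $\deg E\ge\frac{1}{32}K_S^2-(g-1)(b-1)$. You instead use nefness of $K_{S/B}$. In characteristic $p$ this is Szpiro's theorem for relatively minimal fibrations of genus $g\ge 2$ with smooth generic fibre (Ast\'erisque 86). So the step you flagged is a citation to make, not a gap; it is not covered by \cite{s14}, and $g\ge 2$ is automatic here.

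Your algebra and the final comparison via $6g(g-1)\ge 0$ are correct. They give the stronger constant $\frac{32(3g+1)}{15g+1}$.

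The detour through $N_1$ is in fact unnecessary. Nefness alone gives $K_{S/B}^2\ge 0$, i.e.\ $K_S^2\ge 8(g-1)(b-1)$, and $8\ge\frac{32(g+1)}{7g+1}$ for $g\ge 1$.

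This also explains why your weaker slope constant beat the paper's sharper one. From $x-8A\ge c\,(\frac{x}{32}-A)$ one gets $x\ge\frac{32(8-c)}{32-c}A$, which is decreasing in $c$. The reason is that the lower bound $\frac{x}{32}-A$ for $\deg E$ is negative in the relevant range, so inserting it only costs.

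The paper's route has the merit of citing a single published inequality verbatim. Yours needs Szpiro's nefness theorem but yields a stronger bound.
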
 

\begin{proof} 
Let $E:=f_{*} \omega _{S/B}$.
By (7), (8) and (10), we have 
\begin{equation}
\frac{g \mu _1}{p^{k_1}}  \geq \mathrm{deg}E \geq \chi(\mathcal{O}_S)-(g-1)(b-1) \geq \frac{1}{32}K_S^2 -(g-1)(b-1).
\end{equation} 
Note that $Z_1$ is effective, $N_{1}=p^{k_1}K_{S/B} - Z_{1} - \mu _{1}F$ and $K_{S}$ are both nef. So
\begin{equation}
\begin{aligned}
p^{2k_{1}}K_S^2&=p^{k_1}K_{S}.(p^{k_1}K_{S}-N_{1}+N_{1})\\
&\geq p^{k_1}K_{S}.(p^{k_1}K_{S}-N_{1})\\
&= p^{k_1}K_{S}.[\mu _{1}+2p^{k_1}(b-1)]F+p^{k_1}K_{S}.Z_{1}\\
&\geq p^{k_1}[\mu _{1}+2p^{k_1}(b-1)]K_{S}.F\\
&=p^{k_1}[\mu _{1}+2p^{k_1}(b-1)](2g-2).
\end{aligned}
\end{equation}
Combining (11) with (12), we have
\begin{equation}
\begin{aligned}
K_{S}^{2}&\geq 4(b-1)(g-1) + \frac{2(g-1)\mu_1}{p^{k_1}}\\
&\geq 4(b-1)(g-1)+ \frac{2(g-1)}{g} \left[ \frac{1}{32}K_S^2 -(g-1)(b-1) \right].
\end{aligned}
\end{equation}
Therefore, $K_S^2\geq \frac{2(g-1)(b-1)(2-\frac{g-1}{g})}{1-\frac{g-1}{16g}}=2(g-1)(b-1)(1+\frac{1}{g})(1+\frac{g-1}{15g+1})$.

If the general fiber $F$ is smooth, then $K_{S/B}^{2} \geq {{4g-4}\over{g}} \mathrm{deg} E$ (see \cite{s7}, Theorem 1.1). Since $\mathrm{deg}E \geq \frac{1}{32}K_S^2 -(g-1)(b-1)$, we obtain
\begin{equation}
\begin{aligned}
K_{S}^{2}&=K_{S/B}^{2}+8(g-1)(b-1) \\
&\geq 8(g-1)(b-1)+ \frac{4(g-1)}{g} \left[ \frac{1}{32}K_S^2 -(g-1)(b-1) \right].\\
\end{aligned}
\end{equation} 
Therefore, $K_S^2\geq \frac{4(g-1)(b-1)(2-\frac{g-1}{g})}{1-\frac{g-1}{8g}}=4(g-1)(b-1)(1+\frac{1}{g})(1+\frac{g-1}{7g+1})$.
\end{proof}


\section{Bounding the regular curves over a non-perfect field}
\subsection{Bound of the order of automorphisms group of curves}

For the fibration $f\colon S \to B$ in Section 2.2, we have known that the key to bound the order of $G$ lies in bounding the order of $\mathrm{Aut}_B(S)$. Since $S$ is a minimal smooth projective surface of general type and $g(B) \geq 2$, we obtain $\mathrm{Aut}_B(S) \cong \mathrm{Aut}_{K(B)}(S_{\eta})$, where $K(B)$ is the function field of $B$, $\eta$  is the generic point of $B$, and $S_{\eta}$ is the generic fiber of $f$. Note that $K(B)$ is not algebraically closed, and $S_{\eta}$ is a regular projective geometrically integral curve over $K(B)$ (see Section 2.2).

Based on this, and unless otherwise specified, the discussion in this chapter will proceed under the following assumptions: Let $K$ be an infinite field of characteristic $p>0$, and $C$ a regular projective geometrically integral curve over $K$.

\begin{defn}[see \cite{s10}, Part \uppercase \expandafter{\romannumeral 1}, 2.4]
Let $P$ be a point of $C$, and $X:=C-\{P\}$. For any field extension $K'$ of $K$, the normalization $\widetilde{C}_{K'}$ of $C_{K'}:=C \times_{K}K'$ contains naturally $X \times_{K}K'$. If $\widetilde{C}_{K'}-X \times_{K}K'$ is a single point, then $P$ is called a $geometric$ $one$-$place$ $point$. 
\end{defn}

\begin{lemma}[see \cite{s10}, Part \uppercase \expandafter{\romannumeral 1}, 2.4.2]
There exists a finite separable extension $K'$ of $K$ such that $C':=C \times_{K}K'$ is a regular curve over $K'$ and every non-smooth point of $C'$ is a $geometric$ $one$-$place$ $point$.
\end{lemma}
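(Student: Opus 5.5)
Since the statement is local on $C$, I would handle each bad point separately and then combine the finitely many resulting extensions. First I would reduce to finitely many points. The geometric generic regularity argument applies: $C$ is geometrically integral, so it is geometrically reduced, and hence smooth on a dense open subset. The non-smooth locus $\Sigma\subset C$ is therefore a finite set of closed points $P_1,\dots,P_r$. Smooth points stay smooth, and hence regular, after any field extension. So it suffices to treat each $P_i$ separately and then take the compositum of the finitely many finite separable extensions obtained. A compositum of finite separable extensions is finite separable.

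For a single non-smooth point $P$, I would use the fact that the normalization commutes with separable base change. If $K'/K$ is separable, then $C_{K'}$ is still regular, because étale base change preserves regularity. So $C_{K'}$ is its own normalization, and the points of $C_{K'}$ over $P$ correspond to the points of $\mathrm{Spec}(\kappa(P)\otimes_K K')$. Now let $\kappa(P)_s$ be the separable closure of $K$ inside the residue field $\kappa(P)$. I would take $K'$ to be a Galois closure of $\kappa(P)_s$ over $K$. Then $\kappa(P)\otimes_K K'$ splits as a product of $[\kappa(P)_s:K]$ copies of the purely inseparable extension $\kappa(P)\otimes_{\kappa(P)_s}K'$ of $K'$. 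Hence every point $P'$ of $C_{K'}$ over $P$ has residue field purely inseparable over $K'$.

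Next I would check that such a $P'$ is a geometric one-place point. Take any further extension $L/K'$. The points of $\widetilde{C}_L$ lying over $P'$ map into the fibre of $C_L\to C_{K'}$ over $P'$, and this fibre is $\mathrm{Spec}(\kappa(P')\otimes_{K'}L)$. That ring has a single prime, because $\kappa(P')/K'$ is purely inseparable and so the tensor product is local. Since normalization is finite and surjective, it follows that exactly one point of $\widetilde{C}_L$ lies over that unique point. More precisely, I need the preimage in $\widetilde{C}_L$ of the unique point of $C_L$ over $P'$ to be a single point, and this is the step I expect to be the main obstacle. The reduced fibre being one point does not by itself stop the normalization from splitting branches.

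To handle that obstacle, I would pass to the algebraic closure $\overline{K'}$. The branches of $C_{\overline{K'}}$ at the point over $P'$ correspond to the points of the normalization. The absolute Galois group of $K'$ acts on them compatibly. The unique point of $C_{\overline{K'}}$ over $P'$ is fixed, and its branches are permuted. Enlarging $K'$ by a further finite separable extension, which is what Bosch–Lütkebohmert–Raynaud do, I would make the branches, i.e. the points of $\widetilde{C}_{\overline{K'}}$ over $P'$, have residue fields purely inseparable over $K'$ and be individually defined. Then the number of places can no longer grow under further extension. In that case "geometric one-place" should be read as a statement about the normalization of $C_{K'}$ at the branch, and I would mimic \cite{s10} in replacing $C$ by the corresponding curve. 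Since the paper only cites this lemma, the honest plan is to invoke \cite{s10}, Part I, 2.4.2, after this reduction.
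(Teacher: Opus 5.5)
Your reduction is correct. The non-smooth locus is finite. Separable base change preserves regularity and smoothness. Your Galois $K'$ makes every point $P'$ of $C_{K'}$ over a non-smooth point have $\kappa(P')$ purely inseparable over $K'$. Ending with an appeal to \cite{s10} is exactly what the paper does, since it states this lemma without proof.

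Your own argument, however, has a genuine gap at the step you flagged, and the workaround you sketch cannot close it. Closed points of $\widetilde{C}_{\overline{K'}}$ have residue field $\overline{K'}$, so asking them to be purely inseparable over $K'$ says nothing. More seriously, for $L\subseteq L'$ the natural map $\widetilde{C}_{L'}\to\widetilde{C}_L$ is surjective, so the number of points of the normalization over $P'$ can only grow under extension of scalars. If $P'$ had two branches, no enlargement of $K'$ would make it one-place. Reading ``geometric one-place'' branch by branch, or replacing $C$ by another curve, proves a different statement. What is missing is a proof that there is exactly one branch already over your $K'$.

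The missing idea is that integral closure in a purely inseparable extension does not split primes. Let $K'_{s}\subseteq\overline{K'}$ be a separable closure of $K'$. Then $C_{K'_{s}}$ is regular, and $\kappa(P')\otimes_{K'}K'_{s}$ is a field, so a single point $P^{s}$ lies over $P'$. Let $A$ be the coordinate ring of an affine neighbourhood of $P^{s}$. The normalization of $C_{\overline{K'}}$ over it is the integral closure $A'$ of $A$ in $F=\operatorname{Frac}(A\otimes_{K'_{s}}\overline{K'})$, and $F$ is purely inseparable over $\operatorname{Frac}(A)$.

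Suppose $\mathfrak q_1,\mathfrak q_2\subset A'$ both lie over $\mathfrak p\subset A$, and let $x\in\mathfrak q_1$. Some $x^{p^e}$ lies in $\operatorname{Frac}(A)$ and is integral over the normal ring $A$. Hence $x^{p^e}\in A\cap\mathfrak q_1=\mathfrak p\subseteq\mathfrak q_2$, so $x\in\mathfrak q_2$ and $\mathfrak q_1=\mathfrak q_2$. Thus $\widetilde{C}_{\overline{K'}}$ has exactly one point over $P'$, with residue field $\overline{K'}$.

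For an arbitrary extension $L/K'$, note that $\widetilde{C}_{\overline{K'}}$ is smooth. Hence $\widetilde{C}_{\overline{L}}=\widetilde{C}_{\overline{K'}}\times_{\overline{K'}}\overline{L}$ still has one point over $P'$, and it surjects onto $\widetilde{C}_L$. So $P'$ is a geometric one-place point, with no further enlargement of $K'$ needed. Conceptually, a normal local ring is geometrically unibranch, and this survives ind-\'etale and purely inseparable base change.
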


\begin{lemma}
Assume that $C$ is geometrically integral and $g:=p_{a}(C) \geq 2 $. Let $\overline{C}:=C \times_{K}\overline{K}$. Let $t$ be the number of singular points on $\overline{C}$, and $\pi \colon \widetilde{C} \to \overline{C}$ the normalization. Then the following assertions hold.
\\(i) If $g(\widetilde{C}) \geq 2$, then $|\mathrm{Aut}_K(C)|<75 \cdot g^{4}$.
\\(ii) If $g(\widetilde{C})=1$, then $|\mathrm{Aut}_K(C)| \leq 24 \cdot (g-1)$.
\\(iii) If $g(\widetilde{C})=0$, $t \geq 2$, and $|\mathrm{Aut}_K(C)|$ is finite, then there is a cyclic subgroup $M$ of $\mathrm{Aut}_K(C)$ such that $[\mathrm{Aut}_K(C):M] \leq g(g-1)$ and $(p,\,|M|)=1$.
\\(iv) If $g(\widetilde{C})=0$, $t=1$, and $|\mathrm{Aut}_K(C)|$ is finite, then $\mathrm{Aut}_K(C) \cong H \rtimes M$, where $H$ is a Sylow $p$-subgroup of $\mathrm{Aut}_K(C)$ (which may be trivial), $M$ is a cyclic subgroup of $\mathrm{Aut}_K(C)$, and $(p,\,|M|)=1$.
\end{lemma}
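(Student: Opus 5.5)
The plan is to let $\mathrm{Aut}_K(C)$ act on $\overline{C}$ and then on $\widetilde{C}$, and to read off the four cases from the geometry of $\widetilde{C}$. Every $\sigma\in\mathrm{Aut}_K(C)$ base changes to an automorphism $\bar\sigma$ of $\overline{C}$ over $\overline{K}$. This gives an injective homomorphism $\mathrm{Aut}_K(C)\hookrightarrow \mathrm{Aut}_{\overline K}(\overline C)$; injectivity holds because $\overline{K}/K$ is faithfully flat. By the universal property of normalization, each $\bar\sigma$ lifts uniquely to $\widetilde{C}$, and distinct automorphisms give distinct lifts since $\pi$ is birational. So $\mathrm{Aut}_K(C)$ embeds in $\mathrm{Aut}_{\overline K}(\widetilde C)$. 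It also preserves the finite set $\mathrm{Sing}(\overline C)=\{x_1,\dots,x_t\}$ and the set $\pi^{-1}(\mathrm{Sing}\,\overline C)$. Before proceeding, I would use Lemma 3.2 to reduce to the case where every singular point of $\overline C$ is unibranch, i.e.\ $\pi^{-1}(x_i)$ is a single point $y_i$. The reduction is to pass to a finite separable $K'$; this is harmless because $\mathrm{Aut}_K(C)\subseteq \mathrm{Aut}_{K'}(C_{K'})$ and $\overline{C}$ does not change. Then $\mathrm{Aut}_K(C)$ permutes the $t$ points $y_i\in\widetilde C$.

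For (i), Corollary 2.5 gives $|\mathrm{Aut}_K(C)|\le|\mathrm{Aut}(\widetilde C)|<75\,g(\widetilde C)^4\le 75 g^4$, since $g(\widetilde C)\le p_a(\overline C)=g$. For (ii), $\widetilde C$ is an elliptic curve and $t\ge1$. Let $G_0$ be the stabilizer of $y_1$; its index is at most $t$. Then $G_0$ embeds in the automorphism group of the elliptic curve $(\widetilde C,y_1)$, which has order at most $24$. Because the points are unibranch, each singular point contributes $\delta_i\ge1$ to $g-1=\sum_i\delta_i$, so $t\le g-1$. Hence $|\mathrm{Aut}_K(C)|\le 24t\le 24(g-1)$.

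For (iii) and (iv), $\widetilde C\cong\mathbb P^1$ and $\mathrm{Aut}_K(C)$ is a finite subgroup of $\mathrm{PGL}_2(\overline K)$ permuting the $t$ points $y_i$. In case (iv), $t=1$, so the whole group fixes $y_1$. By Theorem 2.2, if the group has nontrivial $p$-part it is $H\rtimes M$ with $M$ cyclic of order prime to $p$. If the $p$-part is trivial, the group has order prime to $p$ and fixes $y_1$; a fixed point of a prime-to-$p$ element comes with a second fixed point (diagonalizability), so Corollary 2.3-type reasoning makes it cyclic, i.e.\ $H=1$. In case (iii), let $M$ be the pointwise stabilizer of $\{y_1,y_2\}$. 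By Corollary 2.3, $M$ is cyclic of order prime to $p$. Its index is at most the size of the orbit of the ordered pair $(y_1,y_2)$, which is at most $t(t-1)$, and $t\le g$ since $\sum\delta_i=g$. Hence $[\mathrm{Aut}_K(C):M]\le g(g-1)$.

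The main obstacle is the unibranch reduction and the inequality $t\le g$ (resp.\ $g-1$). We need that the automorphism action still controls the group after the separable base change, and we need to count preimage points correctly when some singularity has several branches. If the reduction via Lemma 3.2 turns out to be awkward, I would instead argue with $\pi^{-1}(\mathrm{Sing})$ directly, using $\#\pi^{-1}(x_i)\le\delta_i+1$. This changes the counts only mildly, though it may not give exactly the stated constants.
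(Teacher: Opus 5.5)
Your proposal is correct and follows the paper's proof essentially step for step. The shared steps are:
\begin{itemize}
\item the reduction via Lemma 3.2 to unibranch singular points;
\item the embedding $\mathrm{Aut}_K(C)\subseteq\mathrm{Aut}_{\overline K}(\overline C)\subseteq\mathrm{Aut}_{\overline K}(\widetilde C)$;
\item Corollary 2.5 for (i);
\item the stabilizer of one singular point together with $t\le g-g(\widetilde C)$ for (ii);
\item the pointwise stabilizer of two singular points and Corollary 2.3 for (iii);
\item Theorem 2.2 for (iv).
\end{itemize}

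Your separate treatment of a trivial $p$-part in (iv) is slightly more careful than the paper's bare citation of Theorem 2.2. As written, though, your step from ``each element has a second fixed point'' to a common second fixed point needs justification. The cleanest fix: if the Sylow $p$-subgroup is trivial, Theorem 2.2 says $y_1$ is not the unique fixed point, so the group fixes two points and Corollary 2.3 makes it cyclic.
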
 

\begin{proof}
By Lemma 3.2, we may choose a finite separable field extension $K'$ of $K$ such that every non-smooth point of $C':=C \times_{K}K'$ is a $geometric$ $one$-$place$ $point$. The universal property of fibred product implies that $\mathrm{Aut}_K(C) \subseteq \mathrm{Aut}_K(C') \subseteq \mathrm{Aut}_{\overline K}(\overline C)$.

(i) If $g(\widetilde{C}) \geq 2$, then  $|\mathrm{Aut}_K(C)| \leq |\mathrm{Aut}_{\overline K}(\widetilde C)|<75g(\widetilde{C})^{4} \leq 75g^{4}$ by Corollary 2.5.

(ii) If $g(\widetilde{C})=1$, then $\overline{C}$ is singular because $g \geq 2>g(\widetilde{C})$. For any singular point $P\in \overline{C}$, given that the non-smooth points of $C'$ are $geometric$ $one$-$place$ $points$, we obtain that the set $\pi^{-1}(P)$ consists of a single point. Let $P_1$ be a singular point of $\overline{C}$. Let 
\begin{equation}
H_1:=\{ v \in \mathrm{Aut}_{\overline k}(\overline C) | v(P_1)=P_1 \},
\end{equation}
\begin{equation}
H':=\{ v \in \mathrm{Aut}_{\overline k}(\widetilde C) | v(\pi^{-1}(P_1))=\pi^{-1}(P_1) \}.
\end{equation}
Then $H_1 \subseteq H'$. Note that $\widetilde C$ is an elliptic curve and $\pi^{-1}(P_1)$ is a single point. It is a fact that there are at most 24 automorphisms of $\widetilde C$ fixing $\pi^{-1}(P_1)$ (see \cite{s12}, Chapter 3, Theorem 10.1 and Appendix A, Proposition 1.2(c)). Hence $|H_1| \leq |H'| \leq 24$. 

Observe that $[\mathrm{Aut}_{\overline k}(\overline C): H_1] \leq g-1$. Indeed, we may assume that the set of singular points of $\overline{C}$ is $\{ P_i \,|\, 1\leq i \leq t \}$ with $t \geq 2$. Define 
$$
H_i:=\left\{ v \in \mathrm{Aut}_{\overline K}(\overline C) \,|\, v(P_{1})=P_{i} \right\}.
$$
Then we have $\mathrm{Aut}_{\overline K}(\overline C)=\cup_{1\leq i \leq t} H_i$. Suppose there exists $i>1$ such that the set $H_i$ is non-empty. Let $v_i \in H_i$. For any $v \in H_i$, the composition $v_i^{-1} v$ fixes the point $P_1$. This implies $v_i^{-1} v \in H_1$. Consequently, $v \in v_i H_1$, which shows that $H_i$ is a left coset of $H_1$. Therefore, $[\mathrm{Aut}_{\overline K}(\overline C):H_1] \leq t \leq g-g(\widetilde{C})= g-1$. Combining this with the inequality that $|H_1| \leq 24$, we obtain $|\mathrm{Aut}_K(C)| \leq 24(g-1)$. 

(iii) If $g(\widetilde{C})=0$, then $\overline{C}$ is singular, and it has at most $g$ singular points. When $t \geq 2$, we may choose two singular points $P_{1}$ and $P_{2}$ of $\overline{C}$. Regard $\mathrm{Aut}_K(C)$ as a subgroup of $\mathrm{Aut}_{\overline K}(\overline C)$, and let
\begin{equation}
M:=\{ v \in \mathrm{Aut}_K(C) | v(P_{1})=P_{1}\,\,\mathrm{and}\,\, v(P_{2})=P_{2} \},
\end{equation}
\begin{equation}
M_{2}:=\{ v \in \mathrm{Aut}_{\overline K}(\widetilde C) | v(\pi^{-1}(P_{1}))=\pi^{-1}(P_{1})\,\,\mathrm{and}\,\,  v(\pi^{-1}(P_{2}))=\pi^{-1}(P_{2})\}.
\end{equation}
Note that $\mathrm{Aut}_K(C) \subseteq \mathrm{Aut}_{\overline{K}}(\overline{C}) \subseteq \mathrm{Aut}_{\overline K}(\widetilde C) \cong \mathrm{PGL}_{2}(\overline K)$, so $M \subseteq M_2$. 

Both $\pi^{-1}(P_{1})$ and $\pi^{-1}(P_{2})$ are single points, so the finite group $M$ fixes two distinct points of $\widetilde C$. By Corollary 2.3, $M$ is a cyclic group with $(p,|M|)=1$. On the other hand, for any $v \in \mathrm{Aut}_K(C)$, $E:=\{ v(P_{1}),v(P_{2}) \}$ is contained in the set of singular points of $\overline C$. Given that $\overline C$ has at most $g$ singular points, there are at most $g(g-1)$ possibilities for $E$. Therefore, $[\mathrm{Aut}_K(C):M] \leq g(g-1)$. 

(iv) If $g(\widetilde{C})=0$ and $t=1$, then $\overline C$ has a unique singular point $P$. 

Note that $\mathrm{Aut}_K(C) \subseteq \mathrm{Aut}_{K'}(C') \subseteq \mathrm{Aut}_{\overline K}(\overline C) \subseteq \mathrm{Aut}_{\overline K}(\widetilde C) \cong \mathrm{PGL}_{2}(\overline K)$. We can regard $\mathrm{Aut}_{K}(C)$ as a finite subgroup of $\mathrm{Aut}_{\overline K}(\widetilde C)$. In other words, as a subgroup of $\mathrm{Aut}_{\overline K}(\widetilde C)$, $\mathrm{Aut}_K(C)$ fixes the single point $\pi^{-1}(P)$ of $\widetilde C$. By Theorem 2.2, the proof is complete.  
\end{proof}

\begin{lemma}
Let $K$ be an infinite field of characteristic $p>0$, and let $C$ be a regular projective curve over $K$ with a unique non-smooth point $Q=(x_0,y_0)$. Suppose that the local equation of $C$ at $Q$ is of one of the following forms.
\begin{equation*}(\mathrm{I})\,\,\,
\text{When $x_0 \in K$, the local equation is}\,\,\,\,\,\,\,\,\,\,\,\,\,\,\,\,\,\,\,\,\,\,\,\,\,\,\,\,\,\,\,\,\,\,\,\,\,\,\,\,\,\,\,\,\,\,\,\,\,\,\,\,\,\,\,\,\,\,\,\,\,\,\,\,\,\,\,\,\,\,\,\,\,\,\,\,\,\,\,\,\,\,\,\,\,\,\,\,\,\,\,\,\,\,\,\,\,\,\,\,\,\,\,\,\,\,\,\,\,\,\,\,\,\,\,
\end{equation*}
$$
y^{p}=P(x):=bx^{r}+\sum _{i=0}^{m} b_{l_i}x^{l_ip},
$$
where $(p,r)=1$, $r\geq 2$, $b\neq 0$, $0=l_0< l_1 < \cdots < l_m$, $b_{l_i} \in K-K^p$, and $K^p:=\{a^p\,|\,a\in K\}$. 
\begin{equation*}(\mathrm{II})\,\,\,
\text{When $x_0 \notin K$, the local equation is}\,\,\,\,\,\,\,\,\,\,\,\,\,\,\,\,\,\,\,\,\,\,\,\,\,\,\,\,\,\,\,\,\,\,\,\,\,\,\,\,\,\,\,\,\,\,\,\,\,\,\,\,\,\,\,\,\,\,\,\,\,\,\,\,\,\,\,\,\,\,\,\,\,\,\,\,\,\,\,\,\,\,\,\,\,\,\,\,\,\,\,\,\,\,\,\,\,\,\,\,\,\,\,\,\,\,\,\,\,\,\,\,\,\,\,
\end{equation*}
$$
y^{p}=P(x):=bx(x^{p^s}-x_0^{p^s})^t+\varphi(x),
$$
where $s,t\in \Bbb{Z}_{>0}$, $x_0^{p^s}\in K-K^p$, and $\varphi(x) \in K[x^p]$.

Let $H$ be a finite $p$-subgroup of $\mathrm{Aut}_K(C)$. If $p_a(C)\ge 2$ and $|H| > p \cdot (2p_{a}(C)-2)$, then the following assertions hold:

(i) $p$ divides $2p_{a}(C)-2$.

(ii) $C$ has infinitely many $K$-rational points.
\end{lemma}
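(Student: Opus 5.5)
We argue by contraposition: a finite $p$-subgroup $H\subseteq \mathrm{Aut}_K(C)$ for which (i) or (ii) fails will be shown to satisfy $|H|\le p(2p_a(C)-2)$. The tool is the action of $H$ on the unique non-smooth point $Q$ and on the geometric picture. Since $Q$ is the unique non-smooth point, every element of $H$ fixes $Q$. Base change to $\overline K$ puts us in the situation of Lemma 3.3(iv). The curve $\overline C$ has normalization $\widetilde C\cong \Bbb P^1_{\overline K}$, and $\pi^{-1}(P)$ is a single point $\infty$, where $P$ lies over $Q$. So $H$ embeds into the stabilizer of $\infty$ in $\mathrm{PGL}_2(\overline K)$. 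By Theorem 2.1 every nontrivial element of $H$ has $\infty$ as its unique fixed point. Hence $H$ acts on $\Bbb A^1=\widetilde C-\{\infty\}$ by translations $z\mapsto z+a$, with $a$ ranging over a finite additive subgroup $\Lambda\subset \overline K$, and $|H|=|\Lambda|$.

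\textbf{Step 1: bound $|H|$ via a canonical invariant.} The dualizing sheaf $\omega_C$ has degree $2p_a(C)-2$ and is $H$-invariant. On $\widetilde C$, pull back along $\pi$ the local generator of $\omega_{\overline C}$ at $P$. This gives a rational differential on $\Bbb P^1$ of the form $h(z)\,dz$, with all zeros and poles concentrated at $\infty$ up to a correction of degree comparable to $2p_a-2$. I would use the explicit local equations (I), (II) to compute the conductor. In case (I), $\overline C$ near $P$ is $y^p=bx^r+(\text{a $p$-th power})$, so after substituting $y\mapsto y-\beta$ it becomes a unibranch singularity $y^p=bx^r$. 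This gives $\delta=(p-1)(r-1)/2$ and a parametrization $x=z^p$, $y=b^{1/p}z^r$. In case (II) the analogous parametrization comes from $x^{p^s}-x_0^{p^s}$.

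\textbf{Step 2: orbit count on $K$-points / the fibre over $x$.} Translations by $\Lambda$ preserve the function $x$ on $C$ only up to the structure dictated by the equation. The invariants of $\Lambda$ in $\overline K[z]$ are generated by the additive polynomial $L(z)=\prod_{a\in\Lambda}(z-a)$, which has degree $|H|$. Expressing the affine coordinate ring $K[x,y]/(y^p-P(x))\otimes\overline K\subset\overline K[z]$, the $H$-action forces the pole orders at $\infty$ of the generators to be controlled by $|H|$. Concretely, $x$ has pole order $p$ (case I) or $p^{s+1}$ (case II), and the Weierstrass-type semigroup at $\infty$ is generated by $p$ and $r$. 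The genus formula gives $2p_a-2=(p-1)(r-1)-2$, so up to the factor $p$ the bound $|H|\le p(2p_a-2)$ should follow unless the semigroup/action condition is degenerate. In that degenerate case $|\Lambda|$ must be divisible in a way forcing $p\mid (p-1)(r-1)-2$ (or the case II analogue), which is (i).

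\textbf{Step 3: rational points.} In the degenerate case, $H$ acts on the $K$-points of $C$ through translations defined over $K$. Any $K$-point of $C$ other than $Q$ then yields an orbit of size $|H|$. I would show $\Lambda\subset K$ (Galois invariance of the fixed point $\infty$ and of $H$), and deduce that $H$-stable structure gives infinitely many $K$-rational points by letting $x$ range over $K$ satisfying $P(x)\in K^p$. The main obstacle is Step 2: making precise how a large translation group is compatible with the inseparable equation $y^p=P(x)$, since the parametrization lives over $K^{1/p}$ rather than $K$. I would first try a direct computation in case (I), and then reduce case (II) to it by a change of variable $x\mapsto x^{p^s}-x_0^{p^s}$ over $K(x_0^{p^s})$.
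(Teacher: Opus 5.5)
Your proposal does not establish either assertion. For (ii) no argument can succeed, because (ii) is false as stated.

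\textbf{Steps 1--2.} Everything here takes place on $\widetilde C\cong\mathbb{P}^1_{\overline K}$, and there no bound on $|H|$ exists in the relevant case. In case (I), let $z$ be the coordinate on $\widetilde C$ with $x=z^p$. Then the completed local ring of the cusp is $\overline K[[z^p,z^r]]$. If $r=\lambda p-1$, then for every $a\in\overline K$
\[
z^r(1+az)^{-r}=(z^r+az^{\lambda p})(1+a^pz^p)^{-\lambda}.
\]
So $z\mapsto z/(1+az)$ preserves this ring and descends to $\overline C$. Hence $\mathrm{Aut}_{\overline K}(\overline C)$ contains a copy of $(\overline K,+)$, and no conductor, differential or pole-order computation over $\overline K$ can give $|H|\le p(2p_a(C)-2)$. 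Your claim that the action ``controls'' the pole orders of $x,y$ also has no basis: $x,y$ are not $H$-invariant, and their pole orders are fixed by the equation.

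What your semigroup remark does give, once made precise, is (i). The $z^{r+1}$-coefficient of $z^r(1+az)^{-r}$ is $-ra\neq 0$, and $r+1\in\langle p,r\rangle$ if and only if $p\mid r+1$. So a nontrivial $p$-element fixing the cusp exists only if $p\mid r+1$, i.e.\ $p\mid 2p_a(C)-2=pr-p-(r+1)$. In case (II) the cusp over $\overline K$ is $w^p=bu^{p^st+1}$, which forces $H=1$ for $p$ odd. This proves (i) for every nontrivial $H$. It is a different route from the paper's, which applies the Hurwitz formula to $C\to C/H$ over $K$, uses $|H|>p(2p_a(C)-2)$ to get $p_a(C/H)=0$, and uses that the $H$-fixed point $Q$, having purely inseparable residue field, is ramified.

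\textbf{Step 3.} Letting $x$ range over $c\in K$ with $P(c)\in K^p$ presupposes infinitely many such $c$, which is (ii) itself. A single rational point yields only one finite $H$-orbit, and the hypothesis on $|H|$ is never used.

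In fact (ii) fails. Take $p\ge5$ and $K=k(t,s_1,s_2)$ with $s_j^{p-1}=1+\epsilon_jt^{p-1}$, where $\epsilon_1\neq\epsilon_2\in k^*$. Let $\theta=-t^{1-p}$; it is not in $K^p$, since $t\notin K^p$. Let $C\colon y^p=x^{p-1}+\theta$. This is of type (I), with unique non-smooth point $Q=(0,\theta^{1/p})$ and $p_a(C)=\frac12(p-1)(p-2)$.

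\begin{itemize}
\item In the coordinates $X=1/x$, $Y=y/x$ the curve is $Y^p=X+\theta X^p$. So every $c\in K$ with $c+\theta c^p=d^p$, $d\in K$, gives the automorphism $X\mapsto X+c$, $Y\mapsto Y+d$.
\item The elements $t$, $t/s_1$, $t/s_2$ qualify, since $t+\theta t^p=0$ and $t/s_j+\theta(t/s_j)^p=(\epsilon_j^{1/p}t/s_j)^p$.
\item They are $\mathbb{F}_p$-independent, because $1,1/s_1,1/s_2$ lie in distinct eigenspaces of $\mathrm{Gal}(K/k(t))$. So they generate $H$ with $|H|=p^3>p^2(p-3)=p(2p_a(C)-2)$.
\item Since $Q$ is not $K$-rational, $C(K)$ is in bijection with the set of admissible $c$. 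That set injects into $\mathrm{Aut}_K(C)$, which is finite. Indeed, the curve $B$ with $k(B)=K$ dominates a Fermat-type curve of genus $(p-2)(p-3)/2\ge3$, so the minimal model over $B$ is of general type, as in Example 3.9.
\end{itemize}

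Hence $C(K)$ is finite, contradicting (ii).

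The paper's own argument for (ii) does not survive either. In case (I), every nontrivial element of $H$ acts by $x\mapsto x/(1+c'x)$ with $c'\in K^*$. So a point over $x=c_0\neq0$ has trivial stabilizer and is unramified in $C\to C/H$, whatever its residue field.
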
 

\begin{proof} 
(i) Let $\pi \colon C \to C/H$ be the quotient morphism. Since $C$ is a regular projective curve, $C/H$ is also a regular projective curve. By the Hurwitz formula (see \cite{s8}, Theorem 7.4.16), we have
\begin{equation}
2p_{a}(C)-2=\mathrm{deg}(\pi) \cdot (2 p_{a}(C/H)-2)+\sum_{x\in C} d_x \cdot [k(x):K],
\end{equation}
where $d_x$ is the different at $x$, $k(x)$ is the residue field of $x$, and $x$ ranges over the closed points of $C$. Since $p_a(C)\ge 2$ and $\mathrm{deg}(\pi)=|H|> p(2p_a(C)-2)$, by (19), we have $p_a(C/H)=0$. 

If $p=2$, then $p\mid (2p_a(C)-2)$ automatically. Assume that $p\neq 2$. As $p_a(C/H)=0$, we have $C/H \cong \Bbb{P}_K^1$. Thus, for any $y \in C/H$, we obtain
\begin{equation}
\mathrm{deg}(\pi)=\displaystyle{\sum_{x \in \pi^{-1}(y)}}e_{x} \cdot [k(x):k(y)]=r_x  e_{x}  [k(x):k(y)]=r_x   e_{x}  [k(x):K],
\end{equation}
where $k(x)$ (resp. $k(y)$) is the residue field of $x$ (resp. $y$), $e_{x}$ is the ramification index of $x$, and $r_x$ is the number of points of $\pi^{-1}(y)$.

As $Q$ is the unique non-smooth point of $C$, $H$ fixes $Q$. This implies that $r_{x_0}=1$. We now compute $k(x_0)$. It suffices to consider the affine neighborhood $\mathrm{Spec}K[x,y]/(y^p-P(x))$. 

If the local equation of $C$ is of type $(\mathrm{I})$,  then $Q=(0,P(0)^{1/p})$, and the maximal ideal at $Q$ is $(x,y^p-P(0))=(x)$. If the local equation is of type $(\mathrm{II})$, then the maximal ideal at $Q=(x_0, P(x_0)^{1/p})$ is $(x^{p^s}-x_0^{p^s})$. In both cases, $k(x_0)$ is a purely inseparable extension of $K$, and hence $Q$ is a ramification point of $\pi$. Thus, $d_{x_0} \geq e_{x_0}$.

If $\pi$ has no other ramification points, then by (19), we obtain $p \,|\,(2p_{a}(C)-2)$. If $x \in C$ is another ramification point of $\pi$, then $p \,|\,[k(x):K]$. Otherwise, by the assumption that $H$ is a $p$-group and (20): $[k(x):K]=1$. Therefore, $p\,|\,e_x$, and $k(x)$ is automatically a separable extension of $K$. Consequently, $d_x \geq 2e_x-2$ (see \cite{Ser1979}, Chapter 4, Proposition 4). Applying (19) again, we have
\begin{equation}
\begin{aligned}
2p_{a}(C)-2&=-2\cdot \mathrm{deg}(\pi)+\sum_{x\in C} d_x \cdot [k(x):K]\\
&\geq -2\cdot \mathrm{deg}(\pi)+d_{x_0}\cdot [k(x_0):K]+r_xd_x\\
&\geq -2\cdot \mathrm{deg}(\pi)+\mathrm{deg}(\pi)+\frac{2e_x-2}{e_x}\mathrm{deg}(\pi)\\
&\geq -\mathrm{deg}(\pi)+\left(2-\frac{2}{3}\right)\mathrm{deg}(\pi)\\
&\geq \frac{1}{3} \mathrm{deg}(\pi) > 2p_{a}(C)-2,
\end{aligned}
\end{equation}
which is a contradiction! Thus, $p \,|\,(2p_{a}(C)-2)$.

(ii) Assume that $C$ has only finitely many $K$-rational points. Then, we may choose $c \in K$ such that $P(c) \notin K^p$. Consequently, the maximal idea at $x_c:=(c,P(c)^{\frac{1}{p}})$ of $C$ is $(x-c,y^p-P(c))=(x-c)$. This implies that the residue field $k(x_c)$ of $x_c$ is $K[y]/(y^p-P(c))$. Thus, $k(x_c)$ is a purely inseparable extension of $K$ of degree $p$. Therefore, $x_c$ is a ramification point of $\pi$,and $d_{x_c} \geq e_{x_c}$. Since there are infinitely many such $c$, $\pi$ would have infinitely many ramification points, which is a contradiction!
\end{proof}

\subsection{Sylow $p\,$-group}
Based on certain assumptions, this section will bound the order of the $p$-group $H$ in Lemma 3.3 (iv). Throughout this section, assume that $K$ is an infinite field of characteirstic $p \geq 5$, and $C$ is a regular projective curve over $K$. Let $L$ be the function field of $C$. Suppose that $C$ satisfies the following properties:

1) $L/K$ is separably generated, and $K^{\frac{1}{p}}L$ is a rational function field over $K^{\frac{1}{p}}$.

2) $C$ is geometrically integral, and $C \times_{K}\overline K$ is a rational curve with a unique singular point.

3) $\mathrm{Aut}_{K}(C)$ has finite order, and $g :=p_{a}(C) \geq 2$.

4) $C$ has infinitely many $K$-rational points, and $p\,|\,(2g-2)$.

Under the above assumptions, we show that
\begin{thm}
For any nontrivial $p$-subgroup $H$ of $\mathrm{Aut}_{K}(C)$, we have $|H|<g$.
\end{thm}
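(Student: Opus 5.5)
We plan to use assumption 1) to realize $C$ concretely and make $H$ act on a rational function field. Set $K':=K^{1/p}$. By 1), $K'L=K'(u)$ is a rational function field. Any $\sigma\in\mathrm{Aut}_K(C)$ acts on $L$ and fixes $K$, so it extends uniquely to a $K'$-automorphism of $K'L$: $K'L\cong K'\otimes_K L$ as a quotient, and $\sigma$ acts on the second factor. This gives an injection $H\hookrightarrow \mathrm{Aut}_{K'}(K'(u))\cong \mathrm{PGL}_2(K')$. By 2), $\overline C$ has a unique singular point $P$, and $H$ fixes the unique place of $K'(u)$ over it. Arguing as in Lemma 3.3(iv) with Theorem 2.1 and Theorem 2.2, every nontrivial element of $H$ has a unique fixed point on $\Bbb P^1$. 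After moving this point to $\infty$ by a coordinate change over $K'$, $H$ acts by translations $u\mapsto u+a$, where $a$ ranges over a finite additive subgroup $A\subset K'$ with $|A|=|H|$.

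Next we bring in the singularity and the genus. The invariant field $K'(u)^H$ equals $K'(w)$ with $w=\prod_{a\in A}(u-a)$, an additive polynomial of degree $|H|$. The subfield $L\subset K'(u)$ is $H$-stable. The main point is that $p_a(C)=g$ is measured by the "jump" at the one-place point: $g$ equals the $K'$-dimension of $\widetilde{\mathcal O}/\mathcal O$ at $\infty$, i.e. the number of gaps in the pole-order semigroup of $L\otimes_K K'$ (or rather its image) at $\infty$. We will compare the gaps of $L$ with the gaps forced by $H$-stability. The set of pole orders of elements of $L$ regular away from $\infty$ is a numerical semigroup $\Gamma$ with $\#(\Bbb N\setminus\Gamma)=g$. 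Since $H$ acts on each Riemann–Roch space $L(n\infty)$ by unipotent translation operators, one can choose elements of $L$ whose pole orders are exactly controlled. In particular, the smallest nonzero element $m\in\Gamma$ is realized by some $z\in L$. Its $H$-orbit lies in the finite-dimensional space $L(m\infty)$, which has dimension $1+\#\{\gamma\in\Gamma:\gamma\le m\}=2$ if $m$ is the first nonzero element. Therefore $z$ is $H$-invariant up to adding constants, so $z-\sigma z\in K$ for all $\sigma$. The map $\sigma\mapsto z-\sigma z$ is then a homomorphism $H\to K$. We will show it is injective, using that $H$ acts faithfully on $L$ and is generated over the invariants by $z$ and lower-pole data. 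This forces $z$ to be a polynomial in $u$ of degree $m$ whose difference quotients by all $a\in A$ are constant. Such a polynomial is additive in $u$ up to a constant, so $p$-adically $m\ge |A|=|H|$.

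Finally, since $m$ is the smallest positive element of $\Gamma$, the integers $1,\dots,m-1$ are gaps, so $g\ge m-1\ge |H|-1$. To get the strict inequality $|H|<g$ we need a second gap count. We will use assumption 4): $p\mid 2g-2$ together with $p\ge 5$ rule out $g=|H|$ or $g=|H|-1$, where $|H|$ is a power of $p$. Also $m$ itself being a power of $p$ with $p$ and $p^e$ realized forces further gaps beyond $m$, because $\Gamma$ cannot contain $m+1$ when $p\ge5$ and $L$ is not separably rational. The infinitely many rational points of 4) will be used to pick a $K$-rational base point so that $z$ can be normalized over $K$ rather than $K'$.

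We expect the main obstacle to be the step producing $z\in L$ with $z-\sigma z\in K$ and an injective $H\to(K,+)$. The issue is that the descent from $K'$ to $K$, and the identification of pole orders in $L$ versus $K'L$, interact with inseparability at the singular point. We would first try to handle this by working with the $H$-stable filtration $L(n\infty)$ and the unipotence of $H$, and only then compare with $K'(u)$.
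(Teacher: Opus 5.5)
\textbf{Gap: the inequality $m\ge |A|$ is false, and your argument has no way to see the descent from $K'$ to $K$.} Your first step is fine: embedding $H$ into the translations of $\mathbb{P}^1_{K'}$ matches the paper's $\sigma x=x/(1+cx)$. The argument then breaks at $m\ge |A|$, and the semigroup set-up leading to it is also wrong.

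Use the normal form that 1)--4) force, namely $y^p=bx^{\lambda p-1}+\sum_i b_{l_i}x^{l_ip}$ with $b_{l_i}\in K\setminus K^p$. There the maximal ideal of $\mathcal{O}_{C,Q}$ is $(x)$, and over $K'$ one can choose the translation coordinate $u$ with $1/x=u^p$.
\begin{itemize}
\item The semigroup is misidentified. Since $v_\infty(L^{*})=p\mathbb{Z}$, every pole order at $\infty$ of an element of $L$ is a multiple of $p$. So your $\Gamma$ is not a numerical semigroup with $g$ gaps. The semigroup with $g$ gaps is $\langle p,\lambda p-1\rangle$, the value semigroup of $\mathcal{O}_{C,Q}\otimes_K K'$, and its elements do not lie in $L$.
\item The inequality fails outright. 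The element $z=1/x=u^p$ realizes $m=p$ and satisfies $\sigma z-z=c\in K$ for every $\sigma\in H$. The map $\sigma\mapsto c$ is injective, exactly as in the paper. Yet $u^p$ has constant differences under \emph{every} translation. Being ``additive up to a constant'' means the difference condition holds for all $a$. So it gives no inequality between $\deg z$ and $|A|$; your inference runs backwards.
\end{itemize}

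This cannot be repaired within your framework, because over $K'$ there is nothing to bound. The completion of $\mathcal{O}_{C_{K'},Q}$ is $K'[[t^p,t^{\lambda p-1}]]$ with $t=1/u$. It is preserved by $t\mapsto t/(1+at)$ for every $a\in K'$, so $\mathrm{Aut}_{K'}(C_{K'})$ contains all translations $u\mapsto u+a$. Pole orders, Riemann--Roch dimensions and degrees in $u$ cannot distinguish $H$ from an arbitrarily large finite subgroup of these translations. The bound must therefore come from descent, that is, from the arithmetic of $K$ versus $K^p$.

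That is what the paper's proof uses:
\begin{itemize}
\item Each $\sigma$ has the form $\sigma y=f_0+y/(1+cx)^{\lambda}$. The requirement $f_0^p(1+cx)^{\lambda p}\in K^p[x^p]$ forces coefficient conditions such as $\sum_j b_{l_{s_j}}e_{l_{s_j}}c^{\theta_{s_j}p}\in K^p$.
\item A Vandermonde matrix with entries in $K^p$ then shows that more than $\theta_{s_1}$ nonzero solutions $c$ would give $b_{l_{s_1}}\in K^p$, a contradiction.
\item When $l_m>\lambda$, a leading-coefficient comparison shows $H$ is trivial.
\item Assumption 4) is not used to normalize a base point. It rules out the degenerate case where all relevant exponents are powers of $p$ and the condition on $c$ becomes additive. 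There, pairs of $K$-rational points give solutions $(c,d)$, hence infinitely many automorphisms, contradicting 3).
\end{itemize}

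Without an ingredient of this kind, your final chain $g\ge m-1\ge |H|-1$, and the refinement via $p\mid 2g-2$, has no support.
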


By Property 1), there exists an element $x$ in $L$ such that $KL^{p} \cong K \langle x \rangle$, where $K \langle x \rangle$ denotes the field extension of $K$ generated by $x$. Thus, $L$ is a purely inseparable extension of degree $p$ over $K\langle x \rangle$. Additionally, since $L/K$ s separably generated, there exists another element $y$ in $L$ such that is a finite separable extension over $K\langle y \rangle$. Consequently, $L$ is both separable and purely inseparable over $K\langle x,y \rangle$, which impies $L=K\langle x,y \rangle$.

Note that $y^{p} \in K\langle x \rangle$, so there exist polynomials $F_{1}(x)$ and $F_{2}(x)$ in $K[x]$ such that $y^p=\frac{F_1(x)}{F_2(x)}$. Then we have
$$
(yF_{2}(x))^{p}=F_{1}(x)F_{2}(x)^{p-1} \in K[x],
$$ 
and $K\langle x,y \rangle=K\langle x,yF_ {2}(x) \rangle$. Replacing $y$ by $yF_{2}(x)$, we may assume that $L=K\langle x,y \rangle$ with
$$
y^{p}=P(x):= \sum _{i=0} ^{n} a_{i}x^{i} \in K[x].
$$

\subsubsection{The local equation of $C$}
$\\$

Through the previous calculations, we obtain that the function field of $C$ is $L=K\langle x,y \rangle$ with $$
y^{p}=P(x):= \sum _{i=0} ^{n} a_{i}x^{i} \in K[x].
$$
Thus,
$$
P'(x)=na_{n}x^{n-1}+ \cdots +2a_{2}x+a_{1}.
$$
By Property 2), $C \times_{K}\overline K$ has a unique singular point. This forces $P'(x)$ to have a unique root $x_0$ in $\overline{K}$. 

Observe that $x_0 \in K$. Otherwise, since
$P'(x)=n_0 a_{n_0}(x-x_0)^{n_0-1}$ with $0<n_0\leq n$ and $P'(x)\in K[x]$, there exist positive integers $s$ and $t$ such that $n_0-1=p^{s}t$ and $(s,t)=1$. Hence, $P'(x)= a_{n_0}(x^{p^s}-x_0^{p^s})^{t}$. Therefore, $x_0^{p^s} \in K$, and $P(x)=x(x^{p^s}-x_0^{p^s})^{t}+\varphi(x)$ for some $\varphi(x)\in K[x^p]$. In this case, the local equation of $C$ is
$$
y^p=x(x^{p^s}-x_0^{p^s})^{t}+\varphi(x).
$$
This yields $g=\frac{1}{2}(p-1)p^s t$, which contradicts the assumption that $p\mid(2g-2)$.

Therefore, we may assume that $C$ has a non-smooth point at $x=0$. Thus,
$$
P'(x)=na_{n}x^{n-1}+ \cdots +2a_{2}x+a_{1}.
$$
Under this assumption, we have $a_{1}=P'(0)=0$. Let $r$ be the smallest integer such that $ra_{r} \neq 0$ with $1<r \leq n$. We have
$$
P'(x)=x^{r-1}[na_{n}x^{n-r}+ \cdots +(r+1)a_{r+1}x+ra_{r}].
$$
By Property 2), $ia_{i}=0$ for all $i \neq r$. Therefore, we may assume that $x$ and $y$ satisfy the following relation
$$
y^{p}=P(x):=bx^{r}+\sum _{i=0}^{m} b_{i}x^{ip},
$$
where $(p,r)=1$ and $b\neq 0$. Note that $C$ is non-smooth at $x=0$, so $(0,P(0)^{1/p})$ is not $K$-rational, and hence $b_0=P(0) \notin K^p$.

If there exists an element $b_{j}$ in $K^{p}$, replacing $y$ by $y-b_j^{\frac{1}{p}}x^j$, we may assume that the local equation of $C$ at its unique non-smooth point is
\begin{equation}
y^p=P(x):=bx^{r}+\sum_{i=0}^{m} b_{l_i}x^{l_ip},
\end{equation}
where $(p,r)=1$, $r \geq 2$, $b\neq 0$, $0=l_0<l_1<\cdots<l_m$, and $b_{l_i} \in  K-K^{p}$.

From (22), we have
$$
g :=p_{a}(C)=\frac{1}{2}(p-1)(r-1).
$$
Thus, we obtain that $p\,|\,(2g-2)$ if and only if $p\,|\,(r+1)$.

\subsubsection{Bounds of the order of Sylow $p$-subgroups}
$\\$

Let $H$ be a Sylow $p$-subgroup of $\mathrm{Aut}_{K}(C)$. Let $r= \lambda p-1$, where $\lambda$ is a positive integer. For any nontrivial element $\sigma \in H$, by Theorem 2.1 and Corollary 2.3, we obtain that the order of $\sigma$ is $p$. Therefore, there exist $c \in K$ and $f_i(x) \in K\langle x \rangle$ such that
\begin{equation}
\left \{
\begin{aligned}
\sigma x=\frac{x}{1+cx}, \,\,\,\,\,\,\,\,\,\,
\\ \sigma y=\sum_{i=0}^{p-1}f_{i}(x)y^{i}.
\end{aligned}
\right.
\end{equation}

\begin{lemma}
For any $\sigma \in H$, there exist $c \in K$ and $f_0(x) \in K \langle x \rangle$ such that
\begin{equation}
\left \{
\begin{aligned}
\sigma x=\frac{x}{1+cx}, \,\,\,\,\,\,\,\,\,\,\,\,\,\,\,\,\,\,\,\,\,\,\,\,\,\,\,\,\,
\\ \sigma y=f_0(x)+\frac{y}{(1+cx)^{\lambda}}.
\end{aligned}
\right.
\end{equation}
Moreover, $f_0(x)$ satisfies the following relation:
\begin{equation}
f_0^p+\frac{\sum _{i=0}^{m} b_{l_i}x^{l_ip}}{(1+cx)^{\lambda p}}-b_{l_0}=\frac{bcx^{\lambda p}}{(1+cx)^{\lambda p}}+\sum_{i=1}^{m} b_{l_i}(\frac{x}{1+cx})^{l_ip}.
\end{equation}
\end{lemma}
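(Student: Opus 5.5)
Starting from the general form (23) for a nontrivial $\sigma\in H$, I will use that $\sigma$ is an automorphism of $L=K\langle x,y\rangle$ over $K$, so it preserves the relation $y^p=P(x)$. Applying $\sigma$ gives
\begin{equation*}
\Bigl(\sum_{i=0}^{p-1}f_i(x)\,y^i\Bigr)^p=P\!\left(\frac{x}{1+cx}\right),
\end{equation*}
and in characteristic $p$ the left side equals $\sum_i f_i^p\,y^{ip}=\sum_i f_i^p P(x)^i$. The right side lies in $K\langle x\rangle$. Note that $1,y,\dots,y^{p-1}$ is a basis of $L$ over $K\langle x\rangle$, but this is not the basis for the identity above. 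Instead I will pass to $\overline{K}$, or to $K^{1/p}$, where $L\otimes K^{1/p}$ is rational by Property 1). So the identity is really one in $K\langle x\rangle$, with $f_i^p\in K^p\langle x^p\rangle$, and $P(x)^i$ carries the non-$p$-th-power term $bx^r$.

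The key step is to show $f_i=0$ for $2\le i\le p-1$ and to determine $f_1$. I would argue by derivations. Apply $d/dx$ to the identity $\sum_i f_i^pP^i=P(x/(1+cx))$. Since $d(f_i^p)=0$, this yields $\sum_{i\ge1} i f_i^pP^{i-1}P'=\frac{d}{dx}P\!\left(\frac{x}{1+cx}\right)$. Now $P'=rbx^{r-1}$, and the right side equals $rb\,x^{r-1}(1+cx)^{-r-1}=rb\,x^{r-1}(1+cx)^{-\lambda p}$, using $r+1=\lambda p$. Dividing by $rbx^{r-1}$ gives
\begin{equation*}
\sum_{i\ge1} i f_i^p P^{i-1}=(1+cx)^{-\lambda p},
\end{equation*}
whose right side is a $p$-th power. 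Writing $P=bx^r+Q(x^p)$, the elements $1,P,\dots,P^{p-2}$ are linearly independent over $K^{1/p}\langle x^p\rangle$-type coefficients, because $x^r$ with $(r,p)=1$ generates a degree $p$ extension of $\overline K(x^p)$, and $P$ differs from $bx^r$ by an element of that base. The same holds over $\overline K\langle x^p\rangle$ in the function field $\overline K\langle x\rangle$, where $f_i^p\in\overline K\langle x^p\rangle$. Comparing coefficients then forces $if_i^p=0$ for $i\ge2$, so $f_i=0$ for $2\le i\le p-1$, and $f_1^p=(1+cx)^{-\lambda p}$. Hence $f_1=(1+cx)^{-\lambda}$, which gives (24).

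Substituting back, the identity becomes $f_0^p+(1+cx)^{-\lambda p}P(x)=P(x/(1+cx))$. Expanding $P(x/(1+cx))=b x^r(1+cx)^{-r}+\sum_i b_{l_i}(x/(1+cx))^{l_ip}$, together with $(1+cx)^{-\lambda p}bx^r$, the two $bx^r$-terms differ by $bx^r(1+cx)^{-\lambda p}\bigl((1+cx)-1\bigr)=bcx^{\lambda p}(1+cx)^{-\lambda p}$. Moving the $i=0$ term $b_{l_0}$ of $P(x/(1+cx))$ across then gives exactly (25).

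I expect the main obstacle to be the independence argument in the middle step. One must check carefully that $P^{i-1}$ cannot conspire with the $f_i^p$ because $x^r\notin \overline K(x^p)$. I would first reduce to $\overline K$, where this is clean, since $\overline K(x)/\overline K(x^p)$ has basis $1,x,\dots,x^{p-1}$ and $P^j\equiv b^jx^{rj}$ modulo lower powers of $x^r$. As a fallback, if the derivation route misbehaves, I would instead compare the $y$-expansion directly via uniqueness of representation in $L$ over $K\langle x\rangle$ after rewriting $\sigma y^p$.
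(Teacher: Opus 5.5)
Your argument is correct, and it reaches (24) by a somewhat different device than the paper.

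The paper first rewrites $(\sigma y)^p-y^p$ explicitly as in (26) and equates it with $f_0^p+(f_1^p-1)P+f_2^pP^2+\cdots+f_{p-1}^pP^{p-1}$. It then compares the two sides of (28) through the decomposition of $K\langle x\rangle$ over $K\langle x^p\rangle$ by exponents modulo $p$. This uses that the monomials $x^{ir}$, $1\le i\le p-1$, lie in distinct classes because $(p,r)=1$.

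You instead differentiate $\sum_i f_i^pP^i=P\bigl(x/(1+cx)\bigr)$ with respect to $x$. This kills $f_0^p$ and the whole part $Q:=P-bx^r\in K\langle x^p\rangle$ at once. The chain rule with $r+1=\lambda p$ turns the right-hand side into $rbx^{r-1}(1+cx)^{-\lambda p}$. After dividing by $rbx^{r-1}$, you get a linear relation among $1,P,\dots,P^{p-2}$ with coefficients in $K\langle x^p\rangle$.

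What this buys is a clean independence step. $P$ generates the degree-$p$ extension $K\langle x\rangle/K\langle x^p\rangle$, so its powers form a basis. The paper's one-line comparison, by contrast, tacitly needs a descending induction from the class of $x^{(p-1)r}$. The reason is that $f_i^pP^i$ expands into mixed terms $x^{jr}Q^{i-j}$, which land in the lower classes, in particular the class of $x^r$ where $f_1$ is read off.

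The final substitution giving (25) is the same in both. Your difference of the $bx^r$-terms, $bcx^{\lambda p}(1+cx)^{-\lambda p}$, is correct, and so is your moving of the $i=0$ term $b_{l_0}$ across.

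One simplification: the detour through $\overline{K}$ or $K^{1/p}$ is unnecessary. Indeed $[K\langle x\rangle:K\langle x^p\rangle]=p$, and $P\notin K\langle x^p\rangle$ because $dP/dx=rbx^{r-1}\neq0$. Moreover, all the coefficients $f_i^p$ and $(1+cx)^{-\lambda p}$ already lie in $K\langle x^p\rangle$.
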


\begin{proof} 
For any $\sigma \in H$, by the preceding preparations, we know there exist $c \in K$ and $f_i(x) \in K\langle x \rangle$ such that $\sigma$ satisfies (23). Substituting $\sigma x=\frac{x}{1+cx}$ into (22) and noting that $r=\lambda p-1$, we have
\begin{equation}
\begin{aligned}
(\sigma y)^{p}-y^{p}&=b[(\frac{x}{1+cx})^{r}-x^{r}]+\sum_{i=1}^{m} b_{l_i}[(\frac{x}{1+cx})^{l_ip}-x^{l_ip}]
\\&=b[\frac{1}{(1+cx)^{\lambda p}}-1]x^{r}+\frac{bcx^{\lambda p}}{(1+cx)^{\lambda p}}+\sum_{i=1}^{m} b_{l_i}[(\frac{x}{1+cx})^{l_ip}-x^{l_ip}].
\end{aligned}
\end{equation}
Substituting $\sigma y=\sum_{i=0}^{p-1}f_{i}(x)y^{i}$ into (22), we obtain
\begin{equation}
\begin{aligned}
(\sigma y)^{p}-y^{p}=&f_{0}^{p}(x)+(f_{1}^{p}(x)-1)(bx^{r}+\sum _{i=0}^{m} b_{l_i}x^{l_ip})+
\\&f_{2}^{p}(x)(bx^{r}+\sum _{i=0}^{m} b_{l_i}x^{l_ip})^{2}+\cdots +f_{p-1}^{p}(x)(bx^{r}+\sum _{i=0}^{m} b_{l_i}x^{l_ip})^{p-1}.
\end{aligned}
\end{equation}
Combining (26) and (27), we have
\begin{equation}
\begin{aligned}
&\,\,\,\,\,\,\,\,b[\frac{1}{(1+cx)^{\lambda p}}-1]x^{r}+\frac{bcx^{\lambda p}}{(1+cx)^{\lambda p}}+\sum_{i=1}^{m} b_{l_i}[(\frac{x}{1+cx})^{l_ip}-x^{l_ip}]
\\&=f_{0}^{p}(x)+(f_{1}^{p}(x)-1)(bx^{r}+\sum _{i=0}^{m} b_{l_i}x^{l_ip})+f_{2}^{p}(x)(bx^{r}+\sum _{i=0}^{m} b_{l_i}x^{l_ip})^{2}
\\&\,\,\,\,\,\,\,\,+\cdots +f_{p-1}^{p}(x)(bx^{r}+\sum _{i=0}^{m} b_{l_i}x^{l_ip})^{p-1}.
\end{aligned}
\end{equation}

On the right hand side of (28), since $(p,r)=1$, the terms
$$
(f_{1}^p(x)-1)\cdot x^{r},\,\,\,f_{2}^{p}(x)\cdot x^{2r},\,\,\,\cdots,\,\,\,\text{and}\,\,\,f_{p-1}^{p}(x)\cdot x^{(p-1)r}
$$
do not lie in $K \langle x^p \rangle$. Furthermore, note that for $1 \leq i \leq j \leq p-1$ with $i \neq j$, we have $ir \not\equiv jr \,(mod\,\, p)$. By comparing both sides of (28), we obtain 
$$
f_ {2}(x)=\cdots=f_{p-1}(x)=0, \,\,\text{and}\,\,\,f_{1}(x)=\frac{1}{(1+cx)^\lambda}.
$$
Substituting this into (28), we get
$$
f_0^p+[\frac{1}{(1+cx)^{\lambda p}}-1] \sum _{i=0}^{m} b_{l_i}x^{l_ip}
=\frac{bcx^{\lambda p}}{(1+cx)^{\lambda p}}+\sum_{i=1}^{m} b_{l_i}[(\frac{x}{1+cx})^{l_ip}-x^{l_ip}].
$$
Adding $\sum_{i=1}^{m}b_{l_i}x^{l_ip}$ to both sides immediately yields
$$
f_0^p+\frac{\sum _{i=0}^{m} b_{l_i}x^{l_ip}}{(1+cx)^{\lambda p}}-b_{l_0}=\frac{bcx^{\lambda p}}{(1+cx)^{\lambda p}}+\sum_{i=1}^{m} b_{l_i}(\frac{x}{1+cx})^{l_ip}.
$$
\end{proof}

\begin{lemma}
  If $l_m>\lambda$, then $\mathrm{Aut}_{K}(C)$ has no nontrivial $p$-subgroups.
\end{lemma}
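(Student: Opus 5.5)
The plan is to show that every $\sigma\in H$ is the identity, using the normal form of Lemma 3.6. It suffices to prove that the constant $c$ in (24) must vanish; the rest is immediate from (25).

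\textbf{Step 1: the case $c=0$.} If $c=0$, then (24) reads $\sigma x=x$ and $\sigma y=f_0(x)+y$. Relation (25) becomes
$$f_0^p+\sum_{i=0}^m b_{l_i}x^{l_ip}-b_{l_0}=\sum_{i=1}^m b_{l_i}x^{l_ip},$$
so $f_0^p=0$ and hence $f_0=0$. Thus $\sigma=\mathrm{id}$, and it remains to rule out $c\neq0$.

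\textbf{Step 2: solve (25) for $f_0^p$.} Suppose $c\neq 0$. Put $u:=x^p$ and $d:=c^p\in K^p$, so that $(1+cx)^{p}=1+du$. Then (25) gives
$$f_0^p=b_{l_0}-\frac{\sum_{i=0}^m b_{l_i}u^{l_i}}{(1+du)^{\lambda}}+\frac{bc\,u^{\lambda}}{(1+du)^{\lambda}}+\sum_{i=1}^m b_{l_i}\frac{u^{l_i}}{(1+du)^{l_i}}.$$
The right-hand side lies in $K(u)$. The left-hand side is the $p$-th power of an element of $K\langle x\rangle$, so it lies in $K^p(x^p)=K^p(u)$.

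\textbf{Step 3: expand at the pole $u=-1/d$.} Set $t:=1+du$. Since $d\in K^p$, we have $K^p(u)=K^p(t)$. Any element of $K^p(t)$ has Laurent expansion at $t=0$ with all coefficients in $K^p$. Now compare pole orders on the right-hand side.
\begin{itemize}
\item The two terms with denominator $(1+du)^{\lambda}$ have pole order at most $\lambda$.
\item For $1\le i<m$, the term $b_{l_i}u^{l_i}/(1+du)^{l_i}$ has pole order $l_i<l_m$.
\item The term $b_{l_m}u^{l_m}/t^{l_m}$ has pole order exactly $l_m$, and $l_m>\lambda$ by hypothesis.
\end{itemize}
So the coefficient of $t^{-l_m}$ comes from the last term alone and equals $b_{l_m}(-1/d)^{l_m}$. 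This coefficient must lie in $K^p$. Since $d\in K^p$, this forces $b_{l_m}\in K^p$, contradicting $b_{l_m}\in K-K^p$ from (22). Hence $c=0$, and by Step 1 every element of $H$ is trivial.

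\textbf{Main obstacle.} The delicate point is justifying that the Laurent coefficients in $t$ of an element of $K^p(u)$ lie in $K^p$. I would argue this via the unique partial-fraction decomposition over $K$ of an element of $K^p(t)\subset K(t)$: decomposing over $K^p$ already gives a decomposition over $K$, so by uniqueness its coefficients are in $K^p$. One should also check that the strict inequality $l_m>\lambda$, together with $l_i<l_m$ for $i<m$, genuinely isolates the top pole order, so that no cancellation with the $bc$ term or the other $b_{l_i}$ terms can occur.
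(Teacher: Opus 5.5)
Your proof is correct and uses the same idea as the paper: $f_0^p\in K^p(x^p)$, so a coefficient of the right-hand side of (25) that the hypothesis $l_m>\lambda$ isolates must be a $p$-th power, which forces $b_{l_m}\in K^p$ unless $c=0$, and $c=0$ gives $f_0=0$. The only difference is where you read off that coefficient. The paper multiplies by $(1+cx)^{l_mp}$ and compares the top-degree term $\pm b_{l_m}c^{(l_m-\lambda)p}x^{(2l_m-\lambda)p}$, i.e.\ it works at infinity, where the term $b_{l_m}x^{l_mp}/(1+cx)^{\lambda p}$ dominates. You instead use the order-$l_m$ pole at $x=-1/c$ coming from $b_{l_m}(x/(1+cx))^{l_mp}$. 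Your ``main obstacle'' is immediate from $K^p(t)\subset K^p((t))$.
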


\begin{proof}
Suppose that there exists an automorphism $\sigma$ of order $p$ in $\mathrm{Aut}_{K}(C)$. By (25), we have
\begin{equation}
\begin{aligned}
f_0^p (1+cx)^{l_mp}=&(1+cx)^{(l_m -\lambda) p}(bcx^{\lambda p} -\sum _{i=0}^{m} b_{l_i}x^{l_ip})+
\\&\sum_{i=1}^{m} b_{l_i} {x}^{l_ip}(1+cx)^{(l_m-l_i)p}+b_{l_0}(1+cx)^{l_m p}.
\end{aligned}
\end{equation}

On the right hand side of equation (29), the monomial $b_{l_m}c^{(l_m -\lambda)p} \cdot x^{(2l_m -\lambda)p}$ is the highest-degree term. On the left hand side, however, we have
$$
f_0^p (1+cx)^{l_m p}\in K^p[x^p].
$$
By comparing both sides of (29), we obtain
$$
b_{l_m}c^{(l_m -\lambda)p} \in K^p.
$$
Note that $b_{l_m} \notin K^p$, so $c=0$. Combining this with (29), we obtain $f_0=0$. Therefore, $\sigma=\mathrm{id}$, which contradicts the assumption that $\sigma$ is nontirvial.
\end{proof}

\begin{lemma}
If $l_m=\lambda$, then every $p$-subgroup of $\mathrm{Aut}_K(C)$ has order at most $g$.
\end{lemma}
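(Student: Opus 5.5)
The plan is to show that every $p$-subgroup $H$ embeds into a finite set of roots of an explicit nonzero polynomial of degree at most $\lambda p$, and then compare $\lambda p$ with $g=\frac{1}{2}(p-1)(r-1)=\frac{1}{2}(p-1)(\lambda p-2)$.

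\emph{Step 1: $H$ embeds into $(K,+)$.} By Lemma 3.6 each $\sigma\in H$ is determined by a pair $(c,f_0)$ with $\sigma x=\frac{x}{1+cx}$. Composing two such substitutions gives $x\mapsto \frac{x}{1+(c+c')x}$, so $\sigma\mapsto c$ is a homomorphism $H\to (K,+)$. For the kernel, put $c=0$ in (25). The two sums $\sum_{i\ge1}b_{l_i}x^{l_ip}$ cancel, and we get $f_0^p=0$. Hence $\sigma y=y$ and $\sigma=\mathrm{id}$. So $|H|$ equals the cardinality of the set $A\subset K$ of values $c$ that actually occur.

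\emph{Step 2: a condition on $c$ from the $x^{\lambda p}$ coefficient.} Since $l_m=\lambda$, multiply (25) by $(1+cx)^{\lambda p}=(1+c^px^p)^{\lambda}$. This gives
\begin{equation*}
f_0^p(1+cx)^{\lambda p}= bcx^{\lambda p}-\sum_{i=0}^m b_{l_i}x^{l_ip}+\sum_{i=1}^{m} b_{l_i}x^{l_ip}(1+c^px^p)^{\lambda-l_i}+b_{l_0}(1+c^px^p)^{\lambda}.
\end{equation*}
The left side lies in $K^p\langle x^p\rangle$, and in fact in $K^p[x^p]$ after clearing the denominator of $f_0$ suitably. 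So every coefficient on the right lies in $K^p$. For the coefficient of $x^{\lambda p}$, the terms $-b_{\lambda}$ and $+b_{\lambda}$ (from $i=m$) cancel, and what remains is
\begin{equation*}
\Phi(c):=bc+\sum_{i=0}^{m-1} b_{l_i}c^{(\lambda-l_i)p}\in K^p .
\end{equation*}
Its top-degree part is $b_{l_0}c^{\lambda p}$ with $b_{l_0}=b_0\notin K^p$.

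\emph{Step 3: kill $K^p$ with a derivation.} Since $b_0\notin K^p$, $b_0$ can be taken as part of a $p$-basis of $K$ over $K^p$. Hence there is a derivation $D$ of $K$ with $D(K^p)=0$ and $D(b_0)=1$. Applying $D$ to $\Phi(c)\in K^p$, and using $D(c^p)=0$ and $D(bc)=D(b)c+bD(c)$, gives a relation involving $D(c)$. This is the main obstacle, because $c$ itself need not be a $p$-th power.

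I would handle it as follows. If $\lambda\ge 2$ or some intermediate coefficient is available, first use a coefficient of $x^{jp}$ with $1\le j<\lambda$ and $\binom{\lambda}{j}\not\equiv0 \pmod p$. There the linear term $bc$ does not appear, so applying $D$ gives
\begin{equation*}
\binom{\lambda}{j}c^{jp}+\sum_{i\ge1} D(b_{l_i})\binom{\lambda-l_i}{j-l_i}c^{(j-l_i)p}=0,
\end{equation*}
a nonzero polynomial equation in $c^p$ of degree $j$. Since Frobenius is injective, this has at most $j<\lambda$ solutions $c$. In the remaining case, when every such binomial vanishes, for instance $\lambda=p^v$, I would instead use $\Phi$ directly. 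Since $A$ is a finite group, $\Phi$ is additive, and $\bar\Phi: A\to K/K^p$ vanishes, so $A\subset\ker$. I would then bound $|A|$ by the degree $\lambda p$ of the additive polynomial obtained after applying $D$ and eliminating the $D(c)$-term. The elimination uses the group structure: replacing $c$ by $c+a$ with $a\in A$ is consistent.

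\emph{Step 4: compare with $g$.} In every case $|H|\le\lambda p$. For $p\ge5$ and $\lambda p\ge 4$ we have
\begin{equation*}
g=\tfrac{1}{2}(p-1)(\lambda p-2)\ge 2(\lambda p-2)\ge\lambda p,
\end{equation*}
so $|H|\le g$, which is the assertion of the lemma.
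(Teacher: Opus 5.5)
There is a genuine gap: the case where $\lambda$ is a power of $p$ is not actually handled.

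Steps 1 and 2 are correct. So is the first half of Step 3. When $\lambda$ is not a power of $p$, the $x^{jp}$-coefficient with $\binom{\lambda}{j}\not\equiv 0$, together with a derivation $D$ of $K$ over $K^p$ with $D(b_0)=1$, gives a nonzero polynomial of degree $j$ in $c^p$. Hence $|H|\le j<\lambda<g$, which is a neat substitute for the paper's Vandermonde argument.

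\emph{First problem.} When $\lambda=p^v$, your claim that $\Phi$ is additive is false in general. The map $c\mapsto b_{l_i}c^{(\lambda-l_i)p}$ is additive only when $\lambda-l_i$ is a power of $p$, so $\bar\Phi\colon A\to K/K^p$ need not be a homomorphism. This part can be repaired with your own method:
\begin{itemize}
\item take the smallest $i\ge 1$ with $\lambda-l_i$ not a power of $p$;
\item take a derivation $D_i$ with $D_i(K^p)=0$ and $D_i(b_{l_i})=1$ (possible since $b_{l_i}\notin K^p$);
\item take $j$ with $l_i<j<\lambda$ and $\binom{\lambda-l_i}{j-l_i}\not\equiv 0 \pmod p$.
\end{itemize}
By Lucas' theorem the contributions of $l_0,\dots,l_{i-1}$ to the $x^{jp}$-coefficient of (30) vanish. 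Applying $D_i$ then gives a nonzero polynomial of degree $j-l_i$ in $c^p$. This is the content of the paper's ``second highest degree $\theta p$'' argument.

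\emph{Second, more serious problem.} Suppose $\lambda,\lambda-l_1,\dots,\lambda-l_{m-1}$ are all powers of $p$ (the paper's Case (1)). Then (30) collapses to $f_0^p(1+cx)^{\lambda p}=\Phi(c)\,x^{\lambda p}$. So $\Phi(c)\in K^p$ is the only constraint, and conversely every such $c$ yields an automorphism via (33).

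No derivation argument can bound this set. Applying $D$ gives $D(bc)=-c^{\lambda p}$, which still involves the unknown $D(c)$ and merely restates $\Phi(c)\in K^p$. The ``elimination of the $D(c)$-term'' you describe does not exist, and the bound $|A|\le\lambda p$ is unsupported.

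In fact, under hypothesis 4) this set is infinite. For $K$-rational points $(x_0,y_0),(x_1,y_1)$ one has $\Phi(1/x_0-1/x_1)=(y_0/x_0^{\lambda}-y_1/x_1^{\lambda})^p$. The paper uses exactly this to produce infinitely many automorphisms, contradicting the finiteness of $\mathrm{Aut}_K(C)$ (property 3)). So Case (1) cannot occur at all.

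The missing idea is to invoke property 4), which your proposal never uses and which is indispensable here. Your finiteness-of-$A$ assumption alone gives no numerical bound.
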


\begin{proof}
From (25), we have
\begin{equation}
f_0^p (1+cx)^{\lambda p}=bcx^{\lambda p} +\sum_{i=1}^{m-1} b_{l_i} {x}^{l_ip}[(1+cx)^{(\lambda-l_i)p}-1]+b_{l_0}[(1+cx)^{\lambda p}-1].
\end{equation}

Step 1. We consider a special case.
$$
\text{Case (1):\,} \lambda p,\,(\lambda -l_1) p,\,(\lambda -l_2) p,\,\cdots,\,\text{and}\,(\lambda -l_{m-1}) p\,\,\text{are all povers of\,\,$p$.}
$$
In this case, we obtain
$$
(1+cx)^{(\lambda-l_i)p}-1=(cx)^{(\lambda-l_i)p}.
$$
By (30), we have
\begin{equation}
\begin{aligned}
f_0^p (1+cx)^{\lambda p}=&bcx^{\lambda p} +b_{l_0}(cx)^{\lambda p} +
x^{\lambda p}\sum_{i=1}^{m-1} b_{l_i} c^{(\lambda -l_i)p}
\\=&(bc+b_{l_0} c^{\lambda p}+ \sum_{i=1}^{m-1} b_{l_i} c^{(\lambda -l_i)p} ) \cdot x^{\lambda p}.
\end{aligned}
\end{equation}

If there exists an automorphism of $C$ satisfying (24), then there exists an element $d \in K$ such that \begin{equation}
\left \{
\begin{aligned}
bc+b_{l_0} c^{\lambda p}+ \sum_{i=1}^{m-1} b_{l_i} c^{(\lambda -l_i)p}=d^p,
\\ f_0(x)=\frac{dx^\lambda}{(1+cx)^\lambda}.
\end{aligned}
\right.
\end{equation}
Conversely, if there exist elements $c,d \in K$ such that
$$
bc+b_{l_0} c^{\lambda p}+ \sum_{i=1}^{m-1} b_{l_i} c^{(\lambda -l_i)p}=d^p,
$$
then there exists an automorphism $\sigma$ of $C$ of order $p$ defined by
\begin{equation}
\left \{
\begin{aligned}
\sigma x=\frac{x}{1+cx},\,\,\,\,\,\,\,\,
\\ \sigma y=\frac{dx^{\lambda}+y}{(1+cx)^{\lambda}}.
\end{aligned}
\right.
\end{equation}

According to property 4), $C$ has  infinitely many $K$-rational points. Taking any two distinct $K$-rational points $(x_0,y_0)$ and $(x_1,y_1)$, we obtain
\begin{equation}
 y_j^p=bx_j^r+b_{l_0}+\sum_{i=1}^{m} b_{l_i}(x_j)^{l_ip},\,\, j\in \{0,1\}.
\end{equation}
Note that $C$ is non-smooth at $x=0$ , so $x_0$ and $x_1$ are both non-zero. Thus, we have
\begin{equation}
  \left(\frac{y_j}{x_j^{\lambda}}\right)^p=\frac{b}{x_j}+
  \frac{b_{l_0}}{x_j^{\lambda p}}+\sum_{i=1}^{m-1} \frac{b_{l_i}}{(x_j)^{\lambda p-l_ip}}+b_{l_m}, \,\,j\in \{0,1\}.
\end{equation}

Since $\lambda p$, $(\lambda -l_1) p$, $(\lambda -l_2) p$, $\cdots$, and $(\lambda -l_{m-1}) p$ are all powers of $p$, it follows that
\begin{equation}
\begin{aligned}
 \left(\frac{y_0}{x_0^{\lambda}}-\frac{y_1}{x_1^{\lambda}} \right)^p=&b\left(\frac{1}{x_0}-\frac{1}{x_1}\right)+
  b_{l_0}\left(\frac{1}{x_0}-\frac{1}{x_1}\right)^{\lambda p} +
\\& \sum_{i=1}^{m-1} b_{l_i} \left(\frac{1}{x_0}-\frac{1}{x_1}\right)^{\lambda p-l_i p}.
\end{aligned}
\end{equation}
Consequently, we obtain an automorphism $\sigma_0$ of $C$ order $p$ defined by
\begin{equation}
\left \{
\begin{aligned}
\sigma_0 x=\frac{x}{1+c_0x},\,\,\,\,\,\,\,
\\ \sigma_0 y=\frac{d_0 x^{\lambda}  +y}{(1+c_0x)^{\lambda}},
\end{aligned}
\right.
\end{equation}
where $c_0:=\frac{1}{x_0}-\frac{1}{x_1}$ and $d_0:=\frac{y_0} {x_0^{\lambda}}-\frac{y_1}{x_1^{\lambda}}$.

However, since $C$ has infinitely many $K$-rational points, the above method allows us to construct infinitely many automorphisms of $C$. This contradicts the assumption that $|\mathrm{Aut}_K(C)|$ is finite. Therefore, we conclude that $\lambda p$, $(\lambda -l_1) p$, $\cdots$, and $(\lambda -l_{m-1}) p$ cannot all be powers of $p$.

Step 2. Returning to (30), we consider the second highest degree monomial on the right hand side. Note that when $(\lambda-l_i)p$ is a power of $p$, the term $b_{l_i}x^{l_i p}[(1+cx)^{\lambda p- l_i p}-1]=b_{l_i} c^{\lambda p- l_i p} \cdot x^{\lambda p}$ is the highest degree monomial. Let
$$
(1+cx)^{\lambda p}-1=c^{\lambda p}x^{\lambda p}+ e_0 c^{\theta_0 p} x^{\theta_0 p}+\text{(lower degree terms)},
$$ 
where $x^{\theta_0 p}$ is the monomial in the expansion of $(1+cx)^{\lambda p}-1$ with the degree immediately below $x^{\lambda p}$ (when $\lambda p$ is a power of $p$, we set $\theta_0 p=0$), and $e_0$ is the combinatorial coefficient corresponding to $(cx)^{\theta_0 p}$ in the expansion. By definition, $e_0$ is an integer and thus $e_0 \in K^p$.

Similarly, for $1\leq i \leq m-1$, let
$$
(1+cx)^{(\lambda -l_i)p}-1=c^{(\lambda -l_i)p}x^{(\lambda -l_i)p}+ e_i c^{\theta_i p} x^{\theta_i p}+\,\text{(lower degree terms)}.
$$
where $x^{\theta_i p}$ is the monomial in the expansion of $(1+cx)^{(\lambda -l_i)p}-1$ with the degree immediately below $x^{(\lambda -l_i) p}$ (when $\lambda p-l_i p$ is a power of $p$, we set $\theta_i p=0$), and $e_i$ is the combinatorial coefficient corresponding to $(cx)^{\theta_i p}$ in the expansion. By definition, $e_i \in K^p$.

Since $\lambda p$, $(\lambda -l_1) p$, $(\lambda -l_2) p$, $\cdots$, and $(\lambda -l_{m-1}) p$ are not all powers of $p$, there must exist some $i$ ($0 \leq i \leq m-1$) such that the term $e_i c^{\theta_i p} x^{\theta_i p}$ is non-constant.

Define 
$$
\theta p= max\,_{0\leq i \leq m-1}\, \{(l_i+\theta_i) p \,|\, \theta_i p \neq 0 \}.
$$
Then $0 < \theta p < \lambda p$. Note that in (22), we explicitly set $l_0=0$. We may express the set $\{ l_i \,\,|\,\, (l_i+ \theta_i)p=\theta p \,\,\text{and}\,\, \theta_i p \neq 0\}$ as $\{l_{s_1}, l_{s_2}, \cdots, l_{s_h}\}$, where $s_1<s_2<\cdots<s_h$.

By definition,
$$
\theta p=(\theta_{s_1}+l_{s_1})p=\cdots=(\theta_{s_h}+l_{s_h})p.
$$
Thus, $\theta_{s_1}>\theta_{s_2}>\cdots>\theta_{s_h}\geq 1$.

We now observe that the coefficient of the monomial $x^{\theta p}$ on the right hand side of (30) is
\begin{equation}
b_{l_{s_1}} e_{l_{s_1}} c^{\theta_{s_1}p}+b_{l_{s_2}} e_{l_{s_2}} c^{\theta_{s_2}p} +\cdots + b_{l_{s_h}} e_{l_{s_h}} c^{\theta_{s_h}p}.
\end{equation}
Furthermore, since the left hand side of (30) is $f_0^p(1+cx)^{\lambda p} \in K^p[x^p]$, it follows that 
\begin{equation}
b_{l_{s_1}} e_{l_{s_1}} c^{\theta_{s_1}p}+b_{l_{s_2}} e_{l_{s_2}} c^{\theta_{s_2}p} +\cdots + b_{l_{s_h}} e_{l_{s_h}} c^{\theta_{s_h}p} \in K^p.
\end{equation}

We assert that there exist at most $\theta_{s_1}$ distinct nonzero elements $c$ in $K$ satisfying (39). Otherwise, suppose there exist elements $c_1, c_2, \cdots, c_{\theta_{s_1}+1}$ in $K$ such that
\begin{equation}
b_{l_{s_1}} e_{l_{s_1}} c_i^{\theta_{s_1}p}+b_{l_{s_2}} e_{l_{s_2}} c_i^{\theta_{s_2}p} +\cdots + b_{l_{s_h}} e_{l_{s_h}} c_i^{\theta_{s_h}p}\in K^p, \,\, (1\leq i \leq \theta_{s_1}+1).
\end{equation}
Then
\begin{equation}
R \, \alpha
= \left(
\begin{array}{cccc}
d_{1} & d_{2} & \cdots & d_{\theta_{s_1}+1} \\
\end{array}
\right)^{T},
\end{equation}
where $d_{i}\in K^p$ ($1 \leq i \leq \theta_{s_1}+1$),
$$
\alpha_i:=\left(
\begin{array}{ccccc}
b_{l_{s_i}}e_{l_{s_i}} & 0 & \cdots & 0 \\
\end{array}
\right), (1 \leq i \leq h-1)
$$
is a row vector of order $\theta_{s_i}-\theta_{s_{i+1}}$,
$$
\alpha_{h}:=\left(
\begin{array}{ccccc}
b_{l_{s_h}}e_{l_{s_h}} & 0 & \cdots & 0 \\
\end{array}
\right)
$$
is a row vector of order $\theta_{s_h} +1$,
$$
\alpha^T:=\left(
\begin{array}{cccc}
\alpha_1 & \alpha_2 & \cdots & \alpha_h\\
\end{array}
\right)
$$
is a row vector of order $\theta_{s_1}+1$ formed by concatenating the $h$ row vectors $\alpha_1$, $\cdots$, $\alpha_h$, and
\begin{equation}
R:=\left(
\begin{array}{ccccc}
c_1^{\theta_{s_1}p}&c_1^{(\theta_{s_1}-1)p}&\cdots&c_1^{p}&1\\
\vdots & \vdots & \ddots & \vdots & \vdots\\
c_{\theta_{s_1}+1}^{\theta_{s_1}p}&c_{\theta_{s_1}+1}^{(\theta_{s_1}-1)p}
&\cdots&c_{\theta_{s_1}+1}^{p}&1
\end{array} 
\right)
\end{equation}
is a square matrix of order $\theta_{s_1}+1$.

Notably, all entries of the square matrix $R$ lie in $K^p$, and
\begin{equation}
0\neq \mathrm{det}(R)=\prod _{1\leq i<j\leq \theta_{s_1}+1}(c_{i}-c_{j})^{p} \in K^{p}, 
\end{equation}
Consequently, the entries of $R^{-1}$ also belong to $K^p$. This implies $b_{l_{s_1}} e_{l_{s_1}}\in K^p$. However, since $0 \neq e_{l_{s_1}}\in K^p$, it follows that $b_{l_{s_1}} \in K^p$, which is a contradiction.

Therefore, for any Sylow $p$-subgroup $H$ of $\mathrm{Aut}_K(C)$, we have $|H| \leq \theta_{s_1}+1$. Furthermore, since $p \geq 5$ and
\begin{equation}
g=\frac{1}{2}(p-1)(r-1)=\frac{1}{2}(p-1)(\lambda p-2)>\theta_{s_1}+1,
\end{equation}
we obtain that $|H| \leq \theta_{s_1}+1 <g$.
\end{proof}

Similar to Lemma 3.8, when $l_m< \lambda$, by considering (25), we have that every $p$-subgroup of $\mathrm{Aut}_{k}(X)$ has order at most $g$.

By combining Lemmas 3.7 and 3.8, we thus complete the proof of Theorem 3.5.

\subsubsection{Nontrivial $p$-subgroups}
There is a fibration $S \to B$ such that its generic fiber is a rational curve with a unique non-smooth point, and $\mathrm{Aut}_B(S)$ has a nontrivial $p$-subgroup. We construct it as follows.

\begin{example}
Let $k$ be an algebraically closed field of characteristic $p \geq 5$, and let $K:=k(u,v)/(u^p=uv^{p-1}+v)$ be the function field of a smooth projective curve $B$ over $k$, where $B$ is defined by the equation
$$
u^p=uv^{p-1}+vw^{p-1}.
$$

Let $C$ be a regular curve in $\Bbb{P}_K^2$ defined by the equation
$$
y^p=x^{p-1}z+\theta z^p.
$$
where $\theta:=\frac{u}{v}$, and which does not lie in $K^{p}$.

By the Jacobian criterion, the point $(x:y:z)= (0:\theta^{\frac{1}{p}}:1)$ is the unique non-smooth point on $C$. Furthermore, it is straightforward to compute that $p_a(C)=\frac{1}{2}(p-1)(p-2)$, and the local equation of $C$ at the point $(0:\theta^{\frac{1}{p}}:1)$ is
$$
y^p=x^{p-1}+\theta.
$$

Now, define
\begin{equation}
\left \{
\begin{aligned}
\sigma x=\frac{x}{1+vx},\\
\sigma y=\frac{y+ux}{1+vx}.
\end{aligned}
\right.
\end{equation}
Then
$$
(\sigma y)^p =\frac{y^p+u^px^p}{1+v^px^p}
=\frac{x^{p-1}+\theta+u^px^p}{1+v^px^p}
=\frac{x^{p-1}+\theta+(uv^{p-1}+v)x^p}{1+v^px^p},
$$
$$
\begin{aligned}
(\sigma x)^{p-1}+\theta&=\frac{x^{p-1}}{(1+vx)^{p-1}}+\theta
\\&=\frac{x^{p-1}+vx^p+\theta+\theta v^px^p}{1+v^px^p}
\\&=\frac{x^{p-1}+\theta+(uv^{p-1}+v)x^p}{1+v^px^p}.
\end{aligned}
$$
Therefore, $(\sigma y)^p=\sigma(x^{p-1}+\theta)$. This implies that $\sigma$ is an automorphism of $C$. Moreover, a direct computation confirms that $\sigma$ has order $p$.

We construct the fibration $S \to B$.  First, we have the following commutative diagram:
\begin{displaymath}
C \hookrightarrow \xymatrix{
\Bbb{P}_{K}^{2}\cong \Bbb{P}_{B}^{2} \times_{B} \mathrm{Spec}\,K \ar[r] \ar[d] & \mathrm{Spec}\,K \ar[d]\\
\Bbb{P}_{B}^{2} \ar[r]^\pi & B 
}
\end{displaymath}
where $\pi$ is the projection from the fibred product $\Bbb{P}_{B}^{2}\cong \Bbb{P}_{k}^{2} \times_{\mathrm{Spec}\,k} B$ to $B$.

Let $S_0$ be the Zariski closure of $C$ in $\Bbb{P}_{B}^{2}$. Then $\pi|_{S_0} \colon S_0 \to B$ is a fibration with $C$ as its generic fiber. Let $\widetilde{\pi} \colon S \to B$ be the minimal resolution of $\pi|_{S_0}$. Since $C$ is regular, it remains the generic fiber of $\widetilde{\pi}$. 

Finally, since $p_a(C)={{1}\over{2}}(p-1)(p-2)>2$ and $g(B) \geq 2$, $S$ is a minimal smooth projective surface of general type. Consequently, $\mathrm{Aut}_B(S) \cong \mathrm{Aut}_K(C)$, which has a nontrivial $p$-subgroup generated by $\sigma$.
\end{example}


\section{Bound of automorphisms group of fibred surfaces}
Let $k$ be an algebraically closed field of characteristic $p>0$, $S$ a minimal smooth projective surface of general type over $k$, and $B$ a smooth projective curve over $k$.
Suppose that $f \colon S \to B$ is a fibration, and $b:=g(B)\geq 2$. 

\begin{thm}[see \cite{s11}, Proposition 3, Corollary 5 and Corollary 6]
Assume that the general fiber of $f$ is a rational curve with a unique singular point. Then there is a positive integer $n$ such that 
\\(i) the following diagrams are commutative;
\begin{displaymath}
\xymatrix{
S_{0} \ar[r]^{h_{0}} \ar[d]_{f_{0}} & S_{1} \ar[r]^{h_{1}} \ar[d]_{f_{1}}& \cdots 
\ar[r]^{h_{n-2}}& S_{n-1} \ar[r]^{h_{n-1}} \ar[d]_{f_{n-1}} & S_{n}=S \ar[d]^{f_{n}=f}\\
B_{0} \ar[r]^{F_{0}} & B_{1}\ar[r]^{F_{1}} & \cdots  \ar[r]^{F_{n-2}} & B_{n-1} \ar[r]^{F_{n-1}} & B_{n}=B 
}
\end{displaymath}
where $F_{i}\colon B_{i} \to B_{i+1}$ is the relative Frobenius morphism, $S_{i}$ is the normalization of $S_ {i+1} \times_{B_{i+1}}B_{i}$, and $h_{i}\colon S_{i} \to S_{i+1}$ be the composition of the normalization $S_{i} \to S_{i+1} \times_{B_{i+1}}B_{i}$ and the projection $S_{i+1} \times_{B_{i+1}}B_{i} \to S_{i+1}$. 
\\(ii) the general fiber of $f_{0}$ is a smooth rational curve;
\\(iii) the general fiber of $f_{1}$ is a rational curve with a unique non-smooth point of type $y^{p}=x^{r}$, where $(p,r)=1$ and $r\geq2$. 
\end{thm}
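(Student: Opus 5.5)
Write $C:=S_\eta$ for the generic fiber of $f$ and $K:=k(B)$. I would use Tate's genus change theory together with repeated Frobenius base change. The approach is to pull the fibration back along Frobenius until the general fiber becomes smooth, and at each step to normalize the fibre product.

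\textbf{Step 1: construct the tower by descending induction.} Set $S_n:=S$, $B_n:=B$ and $f_n:=f$. Given $f_{i+1}$, take $B_i:=B_{i+1}^{(1/p)}$, so that $F_i\colon B_i\to B_{i+1}$ is the relative Frobenius morphism. Let $S_i$ be the normalization of $S_{i+1}\times_{B_{i+1}}B_i$. The map $h_i$ is then the composition required in (i). Since $f_i$ is induced by the second projection, the squares commute automatically.

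On generic fibers, this step replaces $C_{i+1}$ by the normalization of $C_{i+1}\otimes_{K_{i+1}}K_{i+1}^{1/p}$. By Tate's theorem, whenever $C_{i+1}$ is not smooth the arithmetic genus drops, and the drop is a positive multiple of $(p-1)/2$. The genus is bounded below by the geometric genus of $C\times_K\overline K$, and since the general fiber is rational this is $0$. Hence after finitely many steps we reach a generic fiber that is smooth of genus $0$, i.e.\ a conic over $K_0$. By Tsen's theorem this conic is $\mathbb P^1_{K_0}$, which gives (ii). The number $n$ of steps is the least number of steps needed to reach smoothness.

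\textbf{Step 2: identify the fiber one step above smoothness, which gives (iii).} Here $C_1$ is a regular, non-smooth curve that becomes $\mathbb P^1$ after the single purely inseparable base change $K_1^{1/p}/K_1$. Property 1) of Section 3.2 then holds for $C_1$, and the computation in Section 3.2.1 puts $C_1$ in the form $y^p=P(x)$. Moreover, after normalization the extension $K_1^{1/p}$ kills all the non-$p$-th-power coefficients at once. So I would argue that a single Frobenius step suffices only when the local equation reduces to $y^p=bx^r+b_0$ with $b_0\in K_1\setminus K_1^p$, up to the change $y\mapsto y-b_0^{1/p}$ made after the base change.

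On a general closed fiber over an algebraically closed field, the $p$-th-power constant is absorbed by translation. What remains is the cusp $y^p=x^r$ with $(p,r)=1$ and $r\ge 2$, unique by property 2).

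\textbf{Main obstacle.} I expect the main difficulty to be controlling the behaviour of the general closed fibers, as opposed to the generic fibers, under normalization. One has to ensure that the normalized fibre product commutes with restriction to general fibers, and that the uniqueness of the singular point persists down the tower. I would handle this by spreading out over a dense open subset of $B_i$ and using generic flatness and generic normality there. This lets every statement about the generic fiber transfer to a general fiber.
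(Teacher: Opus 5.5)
The paper does not prove this theorem; it quotes it from the cited source. Your Step 1 is the standard Frobenius-tower argument and gives (i) and (ii) correctly. One caveat there: Tate's theorem only gives divisibility of the genus drop by $(p-1)/2$. Positivity of the drop comes from the fact that geometric regularity is detected after base change to $K^{1/p}$, so $C_{i+1}\otimes_{K_{i+1}}K_i$ is non-normal whenever $C_{i+1}$ is not smooth.

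The genuine gap is in Step 2, and there are two problems.

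First, your claim that one Frobenius step suffices only when $C_1$ is locally $y^p=bx^r+b_0$ is false. Take $y^p=bx^{2p-1}+b_0+b_1x^p$ with $b\neq 0$ and $b_0,b_1\in K_1\setminus K_1^p$. This curve is regular and has a unique non-smooth point, at $x=0$ (it is smooth at infinity). Over $K_1^{1/p}$ the substitution $y'=y-b_0^{1/p}-b_1^{1/p}x$ gives $y'^p=bx^{2p-1}$, which normalizes to $\mathbb{P}^1$. The paper's Lemmas 3.7 and 3.8 are about exactly such $C_1$ with $m\geq 1$.

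Similarly, $y^p=x^{p+1}-cx$ with $c\in K_1\setminus K_1^p$ is of type (II) in Lemma 3.4. Its unique non-smooth point lies at $x^p=c$, so $x_0\notin K_1$, and over $K_1^{1/p}$ it becomes $y^p=x(x-c^{1/p})^p$, again rational. Section 3.2.1 excludes type (II) only by invoking $p\mid(2g-2)$, which you cannot assume here. So the normal form you specialize is not established, and translating away only the constant term does not prove (iii).

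Second, ``unique by property 2)'' is circular. Property 2) for $C_1$ is part of what (iii) asserts; the paper itself deduces it from this theorem in Remark 4.4. Generic flatness or generic normality does not supply it. What you need is:
\begin{itemize}
\item over a general point, $S_1\to S$ restricts to a finite birational map from the fibre of $f_1$ onto the fibre of $f$, and this map is an isomorphism over the smooth locus;
\item the singular point of the fibre of $f$ is unibranch (Lemma 3.2 gives this for the geometric generic fibre, and it spreads out).
\end{itemize}
Since $C_1$ is not smooth, the general fibre of $f_1$ then has exactly one singular point.

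For the type of the singularity, avoid normal forms altogether. Let $L_1$ be the function field of $C_1$ and choose $x$ with $K_1(x)=K_1L_1^p$. Then the DVR $\mathcal{O}_{C_1,Q}$ at the non-smooth point $Q$ is finite and purely inseparable of degree $p$ over the local ring of $\mathbb{P}^1_{K_1}$ below it. It is therefore monogenic, by Nakayama, since $ef=p$ is prime. So $C_1$ is locally $z^p=G$ with $G$ regular there.

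Spreading out, a general fibre is formally $z^p=G_t(x_0+u)$ at its singular point. Write $G_t(x_0+u)=A(u)^p+B(u)$, where $B$ involves only exponents prime to $p$; $B\neq 0$ because the fibre is reduced. The substitution $z\mapsto z-A(u)$ gives $z^p=u^r\epsilon(u)$ with $\epsilon(0)\neq 0$ and $(p,r)=1$, and $r\geq 2$ because the point is singular. Finally $u\mapsto u\,\epsilon(u)^{1/r}$ gives $z^p=u^r$. This works for every $G$, including both examples above.
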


\begin{rmk}
Let $K_i$ be the function field of $B_i$, and $C_i$ the generic fiber of $f_i$. Then $K_i=K_{i+1}^{\frac{1}{p}}$. For $0 \leq i \leq n-1$, $C_i$ is the normalization of $C_{i+1}$ $\times_{K_{i+1}}K_{i}$, and 
\begin{equation}
\mathrm{Aut}_{K_{i+1}}(C_{i+1}) \subseteq \mathrm{Aut}_{K_{i}}(C_{i}).
\end{equation}
\end{rmk}

\begin{rmk}
For $0 \leq i \leq n$, the genus of $B_{i}$ is $b$. The generic fiber $C_1$ of $f_1$ has arithmetic genus $\frac{1}{2}(p-1)(r-1)$. In particular, $p_a(C_1) \geq 2$ when $p \geq 5$. 
\end{rmk}

\begin{rmk}
When $p\geq 5$, by Theorem 4.1 and Remark 4.3, the generic fiber $C_1$ of $f_1$ satisfies the properties 1), 2) and 3) in Section 3.2.
\end{rmk}

\begin{cor}
Let $\widetilde{f}_{i}\colon \widetilde{S}_{i} \to B_{i}$ be the minimal resolution of $f_{i}\colon S_{i} \to B_{i}$. Then the following assertions hold.

(i) The generic fiber of $\widetilde{f}_{i}$ is the same as that of $f_{i}$. 

(ii) Suppose $p \geq 5$. Then $\widetilde{S}_{i}$ is a minimal smooth projective surface of general type for $1 \leq i \leq n$ and $\widetilde{S}_{0}$ is a geometrically ruled surface. Moreover, we have $\mathrm{Aut}_{B_{i}}(\widetilde{S}_{i}) \cong \mathrm{Aut}_{K_{i}}(C_{i})$ for $1 \leq i \leq n$.
\end{cor}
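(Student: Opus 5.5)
For (i), I will use that the minimal resolution $\widetilde{S}_i\to S_i$ is a birational morphism which is an isomorphism away from the finitely many singular points of the normal surface $S_i$. These points lie in finitely many closed fibres of $f_i$. Hence $\widetilde{f}_i$ and $f_i$ agree over a dense open subset of $B_i$, and in particular over the generic point $\eta_i$. So $\widetilde{S}_i\times_{B_i}\eta_i\cong S_i\times_{B_i}\eta_i=C_i$. Here I use that $S_i$ is normal, so its singular locus is a finite set of closed points and $C_i$, a localization of $S_i$, is regular.

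For (ii), first take $i=0$. By Theorem 4.1(ii) the general fibre of $\widetilde{f}_0$ is a smooth rational curve. I will take the relatively minimal model: contract the $(-1)$-curves in fibres. A smooth projective surface fibred over $B_0$ with general fibre $\Bbb P^1$ is birationally ruled over $B_0$. Its relatively minimal model is a $\Bbb P^1$-bundle, hence geometrically ruled. (Strictly, I would argue the minimal resolution is already relatively minimal, or interpret "geometrically ruled" up to this contraction.)

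For $1\le i\le n$, I will show that $\widetilde{S}_i$ is of general type and minimal. By Remark 4.3 and Theorem 4.1(iii), the generic fibre $C_i$ has $p_a(C_i)\ge p_a(C_1)=\tfrac12(p-1)(r-1)\ge 2$; the inequality holds since $p_a$ of $C_{i+1}$ dominates that of its normalized base change. Since $b\ge 2$, the fibration $\widetilde{f}_i$ has base of genus $\ge2$ and fibres of arithmetic genus $\ge 2$. To get general type, I will appeal to the classification of surfaces in characteristic $p$ (Bombieri--Mumford). A surface that is not of general type and has $\kappa\ge0$ is elliptic, quasi-elliptic, abelian, K3, Enriques, or hyperelliptic, so it cannot carry a fibration with $g(B)\ge2$ whose fibres have arithmetic genus $\ge2$. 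Ruled surfaces are excluded because such a surface over $B_i$ would have rational fibres, contradicting $p_a(C_i)\ge2$. A rational surface admits no map onto a curve of genus $\ge2$. So $\kappa=2$.

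For minimality, I will take a $(-1)$-curve $E$. Since $E\cong\Bbb P^1$ and $g(B_i)\ge2$, $E$ is vertical. I will show that no $(-1)$-curve lies in a fibre, and I expect this to be the main obstacle. For $i=n$, $\widetilde{S}_n=S$ is already smooth and minimal. For $i<n$, I would use that the exceptional curves of a minimal resolution are not $(-1)$-curves. Then any $(-1)$-curve would be the strict transform of a curve on $S_i$. Through $h_i\circ\cdots$ it maps finitely, via a purely inseparable map on the base, onto a vertical curve in $S$. I would then compare $K$-degrees, using that $K_{S_i}$ is the pullback of $K_S$ twisted by a fibre-type correction coming from the Frobenius base change. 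The aim is to show $K_{\widetilde{S}_i}\cdot E\ge0$ for vertical $E$. If this fails, I would instead replace $\widetilde{S}_i$ by its minimal model, which keeps the generic fibre. This is enough for the final isomorphism.

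Finally, I will prove $\mathrm{Aut}_{B_i}(\widetilde{S}_i)\cong\mathrm{Aut}_{K_i}(C_i)$. Restriction to the generic fibre gives an injective map $\mathrm{Aut}_{B_i}(\widetilde S_i)\to\mathrm{Aut}_{K_i}(C_i)$. Conversely, an automorphism of $C_i$ induces a birational self-map of $\widetilde{S}_i$. Since $\widetilde{S}_i$ is a minimal surface of general type, every birational self-map is biregular, being unique minimal model. This self-map commutes with $\widetilde f_i$ because it does so generically. Hence the map is surjective, giving the isomorphism for $1\le i\le n$.
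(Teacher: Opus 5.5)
The gap is the minimality of $\widetilde S_i$ for $1\le i\le n-1$, and likewise the relative minimality needed for $\widetilde S_0$ to be geometrically ruled. You flag this as the main obstacle but do not prove it, and the comparison you sketch cannot work as stated.

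The rest is fine. Your arguments for (i), for the general-type claim, and for $\mathrm{Aut}_{B_i}(\widetilde S_i)\cong\mathrm{Aut}_{K_i}(C_i)$ \emph{given} minimality agree with the paper. The Bombieri--Mumford argument is a more explicit substitute for the paper's one-line ``$p_a(C_i)\ge 2$, hence general type''. In the ruled case, say that $\widetilde f_i$ must coincide with the ruling, so its general fibre would be a smooth $\mathbb{P}^1$; ``rational fibres'' alone is no contradiction, since the fibres here are geometrically rational.

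The sketched comparison fails because the correction relative to $K_S$ is not of fibre type. Since $K(S_i)^p\subseteq K(S_{i+1})$, the surface $S_{i+1}$ is the quotient of $S_i$ by a rank-one foliation $\mathcal G\cong f_i^*T_{B_i}(-D)$ with $D\ge 0$. Ekedahl's formula for foliation quotients then gives
\[
K_{S_i}\sim h_i^*K_{S_{i+1}}-(p-1)\bigl(f_i^*K_{B_i}+D\bigr).
\]
Restricting to the generic fibre shows that $D$ has horizontal degree $2(p_a(C_{i+1})-p_a(C_i))/(p-1)$, which is positive whenever the genus drops. In addition, $K_{\widetilde S_i}=\rho^*K_{S_i}-\Delta$ with $\Delta\ge 0$ exceptional for the minimal resolution $\rho$. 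Both corrections lower $K$ on vertical curves.

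For instance, take $i=n-1$ and a vertical curve $E\subset S_{n-1}$ mapping onto a $(-2)$-curve of $S$ with $D\cdot E>0$. Then $K_{\widetilde S_{n-1}}\cdot\widetilde E\le -(p-1)D\cdot E<0$. By Zariski's lemma and adjunction, $\widetilde E$ would be a $(-1)$-curve. So nefness of $K_S$ does not propagate formally, and one needs an actual argument excluding such configurations. The paper's proof does not supply one either; it simply asserts that $\widetilde f_i$ contracts no $(-1)$-curve.

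Your fallback does not close the gap, because it proves a different statement. Passing to the minimal model $S_i^{\min}$ of $\widetilde S_i$, which has the same generic fibre, gives $\mathrm{Aut}_{K_i}(C_i)\cong\mathrm{Aut}_{B_i}(S_i^{\min})$. It gives neither the minimality of $\widetilde S_i$ nor $\mathrm{Aut}_{B_i}(\widetilde S_i)\cong\mathrm{Aut}_{K_i}(C_i)$. If $\widetilde S_i\to S_i^{\min}$ were a nontrivial sequence of blow-ups, elements of $\mathrm{Aut}_{K_i}(C_i)$ would act on $S_i^{\min}$ without necessarily preserving the blown-up points, so they need not lift to $\widetilde S_i$. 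The same objection applies to reading ``geometrically ruled'' for $\widetilde S_0$ only up to contracting $(-1)$-curves.

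The later uses of the corollary (Lemma 4.6 and Theorem 4.7(iv)) only need $\mathrm{Aut}_{K_1}(C_1)\cong\mathrm{Aut}_{B_1}(X)$ for some minimal surface $X$ of general type fibred over $B_1$ with generic fibre $C_1$. Your minimal-model version would therefore suffice downstream. As a proof of the corollary as stated, however, the minimality assertions remain unproved.
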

\begin{proof} (i) It is obvious.

(ii) For $1 \leq i \leq n$, since there is no $(-1)$-rational curve contracted by $\widetilde{f}_{i}$ and $g(B_{i})=b \geq 2$, we have $\widetilde{S}_{i}$ is a minimal projective surface. Let $C_{i}$ be the generic fiber of $\widetilde{f}_{i}$. Given that $p \geq 5$, we have
\begin{equation}
p_{a}(C_{i}) \geq p_{a}(C_{1})={{1} \over {2}}(p-1)(r-1) \geq 2, 
\end{equation}
hence $\widetilde{S}_{i}$ is a surface of general type. 

As for $\widetilde{S}_{0}$, since the generic fiber of $\widetilde{f}_{0}$ is a smooth rational curve, we obtain that $\widetilde{S}_{0}$ is a geometrically ruled surface. For $1 \leq i \leq n$, since $\widetilde{S}_{i}$ is the unique minimal model and $b=g(B_i) \geq 2$, we have $\mathrm{Aut} _{B_{i}}(\widetilde{S}_{i}) \cong \mathrm{Aut}_{K_{i}}(C_{i})$.
\end{proof}

\begin{lemma}
Let $C_{1}$ be the generic fiber of $\widetilde{f}_{1}: \widetilde{S}_{1} \to B_{1}$. Assume that $\overline{C}_{1}:=C_{1} \times_{K_{1}}\overline{K}_{1}$ has a unique singular point and $p \geq 5$. Then the following assertions hold.
\\(i) $\mathrm{Aut}_{B_{1}}(\widetilde{S}_{1}) \cong \mathrm{Aut}_{K_{1}}(C_{1}) \cong H \rtimes M$, where $H$ is a Sylow $p$-subgroup of $\mathrm{Aut}_{K_{1}}(C_{1})$ $($which may be trivial $)$, $M$ is a cyclic subgroup of $\mathrm{Aut}_{K_{1}}(C_{1})$ and $(p,\,|M|)=1$.
\\(ii) $|H|\leq 2(g-1)(2g+1)$, where $g:=p_{a}(C_1)$.
\\(iii) $|\mathrm{Aut}_{B_{1}}(\widetilde{S}_{1})| \leq 8(g-1)(2g+1)^2$
\end{lemma}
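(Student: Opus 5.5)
For (i), I will combine Corollary 4.5(ii), which gives $\mathrm{Aut}_{B_1}(\widetilde S_1)\cong \mathrm{Aut}_{K_1}(C_1)$, with Lemma 3.3(iv). The hypotheses of Lemma 3.3(iv) are available. By Remark 4.4, $C_1$ satisfies properties 1)--3) of Section 3.2. In particular, $C_1$ is geometrically integral with $p_a(C_1)\ge 2$. The group $\mathrm{Aut}_{K_1}(C_1)$ is finite, being isomorphic to a subgroup of $\mathrm{Aut}_k(\widetilde S_1)_{\mathrm{red}}$, which is finite by Matsumura. Since $\overline C_1$ is rational with a unique singular point, Lemma 3.3(iv) applies and yields the decomposition $H\rtimes M$ with $M$ cyclic of order prime to $p$.

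For (ii), I split into cases according to Lemma 3.4.

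\emph{Case A: $|H|\le p(2g-2)$.} Remark 4.3 gives $g=\frac12(p-1)(r-1)$ with $r\ge 2$, hence $p\le 2g+1$. Therefore $|H|\le (2g+1)\cdot 2(g-1)$.

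\emph{Case B: $|H|>p(2g-2)$.} Here I need the local equation of $C_1$ at its non-smooth point. By Theorem 4.1(iii) the geometric singularity is of type $y^p=x^r$. The computation in Section 3.2.1 then shows that the local equation has the form (I) or (II) of Lemma 3.4. Lemma 3.4 gives $p\mid 2g-2$ and that $C_1$ has infinitely many $K_1$-rational points, so property 4) holds as well. Theorem 3.5 then gives $|H|<g$, which contradicts $|H|>p(2g-2)\ge g$. So Case B cannot occur, and in all cases $|H|\le 2(g-1)(2g+1)$.

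For (iii), I bound $|M|$. The cyclic group $M$ has order prime to $p$, so it is abelian of order prime to $p$ acting faithfully on a fibre. Concretely, $M\subseteq \mathrm{Aut}_{K_1}(C_1)$ restricts injectively, via the exact sequence (6), to the automorphism group of a general fibre $F$ of $\widetilde f_1$. This $F$ is an integral curve in $\widetilde S_1$ of arithmetic genus $g$. Theorem 2.6 then gives $|M|\le 8g+4=4(2g+1)$. Combining this with (ii) yields
\[
|\mathrm{Aut}_{B_1}(\widetilde S_1)|=|H|\,|M|\le 2(g-1)(2g+1)\cdot 4(2g+1)=8(g-1)(2g+1)^2 .
\]

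The main obstacle is the injectivity of $M\to\mathrm{Aut}_k(F)$ for a general fibre. A nontrivial element acting trivially on the generic fibre would act trivially on $S$, so restriction to a general fibre should be injective for a finite group. I would argue this by noting that the fixed locus of each nontrivial element is a proper closed subset, and only finitely many elements are involved.

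A second delicate point is confirming in Case B that the local-equation hypotheses of Lemma 3.4 genuinely hold. If type (II) arises, the computation in Section 3.2.1 already shows it is incompatible with $p\mid 2g-2$, which Lemma 3.4(i) forces in Case B. So only type (I) remains, and Theorem 3.5 applies there.
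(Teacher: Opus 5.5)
Your proposal is correct and follows the paper's proof. Part (i) comes from Corollary 4.5 and Lemma 3.3(iv); part (ii) combines Lemma 3.4 with Theorem 3.5 to force $|H|\le p(2g-2)$, together with $p\le 2g+1$; part (iii) bounds the cyclic factor $M$ by Theorem 2.6 on a general fibre. The only differences are minor: you get $p\le 2g+1$ from the explicit formula $g=\frac12(p-1)(r-1)$ of Remark 4.3 rather than from Tate's genus-change formula, and you spell out the case split and the check of Lemma 3.4's local-equation hypothesis, which the paper leaves implicit.
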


\begin{proof}
(i) This follows directly from Corollary 4.5 and Lemma 3.3 (iv).

(ii) Note that $\mathrm{Aut}_{K_{1}}(C_{1}) \subseteq \mathrm{Aut}_{\overline{K}_{1}}(\overline{C}_{1}) \subseteq \mathrm{PGL}_2(\overline{K}_{1})$. Since $\overline{C}_1$ is a rational curve over $\overline{K}_1$ with a unique singular point, Tate's formula for genus change (see \cite{Tate1952}) implies that $(p-1)\,|\,2g$. Consequently, $p \leq 2g+1$.  Moreover, $C_1$ satisfies properties 1), 2) and 3) in Section 3.2. Combining Lemma 3.4 and Theorem 3.5, we obtain
$$
|H| \leq p(2g-2) \leq 2(g-1)(2g+1).
$$ 

(iii) By (i), we have $\mathrm{Aut}_{B_{1}}(\widetilde{S}_{1}) \cong H \rtimes M$, where $H$ is a $p$-group and $M$ is a cyclic group of order coprime to $p$. To bound the order of $\mathrm{Aut}_{B_{1}}(\widetilde{S}_{1})$, it suffices to bound the order of $M$.

Let $F$ be a general fiber of $\widetilde{f}_1$. We have the following exact sequence 
$$
1 \rightarrow \mathrm{Aut}_{B_1}(\widetilde{S}_1) \rightarrow \mathrm{Aut}_{k}(F). 
$$
By Theorem 2.6, $|M| \leq 8g+4$. Combining this with (ii), we obtain
$$
|\mathrm{Aut}_{B_1}(\widetilde{S}_1)| \leq 8(g-1)(2g+1)^2.
$$
\end{proof}

\begin{thm}
Let $f\colon S \rightarrow B$ be a fibration over an algebraically closed field k of characteristic $p > 0$, where S is a minimal smooth projective surface of general type and B is a smooth projective curve of genus $b\geq 2$. Let $C$ be the generic fiber of $f$, and $g:=p_{a}(C)$. Let $K$ be the function field of $B$, and $\overline{C}:=C \times_{K}\overline{K}$. Let $t$ be the number of singular points on $\overline{C}$, and $\pi \colon \widetilde{C} \to \overline{C}$ the normalization. We have
\\(i) if $g(\widetilde{C}) \geq 2$, then $|G| < 5625 \cdot g^{4} \cdot b^{4}$.
\\(ii) if $g(\widetilde{C})=1$, then $|G|<1800 \cdot (g-1) \cdot b^{4}$.
\\(iii) if $g(\widetilde{C})=0$, and $t \geq 2$, then $|G|<300 \cdot g \cdot (g-1) \cdot (2g+1) \cdot b^{4} \leq 3000 \cdot (g-1)^3 \cdot b^4$.
\\(iv) if $g(\widetilde{C})=0$, $t=1$, and $p\geq5$, then $|G|<600 \cdot (g-1)(2g+1)^2 \cdot b^4$. 
\end{thm}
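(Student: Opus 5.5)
The plan is to use the exact sequence (5), which gives $|G| \leq |\mathrm{Aut}_B(S)| \cdot |\mathrm{Aut}_k(B)|$, together with Corollary 2.5 applied to $B$: $|\mathrm{Aut}_k(B)| < 75 b^4$. Since $S$ is minimal of general type and $b \geq 2$, we have $\mathrm{Aut}_B(S) \cong \mathrm{Aut}_K(C)$, as noted at the start of Section 3. It therefore suffices to bound $|\mathrm{Aut}_K(C)|$ in each of the four cases and multiply by $75 b^4$. Here $C$ is regular, projective and geometrically integral over the infinite field $K = k(B)$ (Section 2.2). Also, $\mathrm{Aut}_K(C)$ is finite because $\mathrm{Aut}_k(S)$ is finite (Matsumura). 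Hence Lemma 3.3 applies.

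Case (i): Lemma 3.3(i) gives $|\mathrm{Aut}_K(C)| < 75 g^4$, so $|G| < 75^2 g^4 b^4 = 5625\, g^4 b^4$.

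Case (ii): Lemma 3.3(ii) gives at most $24(g-1)$, so $|G| < 24 \cdot 75 (g-1) b^4 = 1800(g-1) b^4$.

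Case (iii): Lemma 3.3(iii) gives a cyclic subgroup $M$ of index at most $g(g-1)$ with $(p, |M|) = 1$. To bound $|M|$, restrict to a general fiber $F$ via the injection (6), $\mathrm{Aut}_B(S) \hookrightarrow \mathrm{Aut}_k(F)$. Since $F$ is an integral curve on $S$ of arithmetic genus $g \geq 2$, Theorem 2.6 gives $|M| \leq 8g + 4 = 4(2g+1)$. Hence $|\mathrm{Aut}_K(C)| \leq 4 g(g-1)(2g+1)$ and $|G| < 300\, g(g-1)(2g+1) b^4$. For the second inequality, $g(2g+1) \leq 10 (g-1)^2$ holds for $g \geq 3$. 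The case $g = 2$ is excluded because $t \geq 2$ together with $g(\widetilde C) = 0$ forces $g \geq t$, and the genus drop at each singular point is a positive multiple of $(p-1)/2$. I expect this elementary comparison to need a small case check, and I would verify it directly.

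Case (iv) is the main obstacle, since the Sylow $p$-part is not controlled by Theorem 2.6. I will use Theorem 4.1 to pass to the tower $S = S_n \to \cdots \to S_1$ and Remark 4.2 to get the inclusion $\mathrm{Aut}_K(C) = \mathrm{Aut}_{K_n}(C_n) \subseteq \mathrm{Aut}_{K_1}(C_1)$. Lemma 4.6(iii) then bounds the latter by $8(g_1 - 1)(2g_1 + 1)^2$, where $g_1 = p_a(C_1) \leq g$; the inequality holds because genus only drops under normalization of the base change (Remark 4.2 and the genus inequality in Corollary 4.5). This needs the hypotheses of Lemma 4.6 to hold: $\overline{C}_1$ must have a unique singular point, which follows from Theorem 4.1(iii). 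Monotonicity in $g$ then gives $|\mathrm{Aut}_K(C)| \leq 8(g-1)(2g+1)^2$. Multiplying by $75 b^4$ yields $|G| < 600 (g-1)(2g+1)^2 b^4$.
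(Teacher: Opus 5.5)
Your proposal is correct and follows the paper's proof essentially step for step: (5) with $|\mathrm{Aut}_k(B)|<75b^4$, Lemma 3.3(i)--(iii) plus Theorem 2.6 on a general fiber for the first three cases, and for case (iv) the Frobenius tower of Theorem 4.1 with Remark 4.2, Corollary 4.5 and Lemma 4.6(iii), using $p_a(C_1)\le g$. The only slip is in (iii): since $10(g-1)^2-g(2g+1)=(g-2)(8g-5)$, the inequality $g(2g+1)\le 10(g-1)^2$ already holds for all $g\ge 2$ (with equality at $g=2$), so you need not exclude $g=2$ --- which is just as well, because your exclusion argument gives nothing when $p\in\{2,3\}$, where $(p-1)/2\le 1$.
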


\begin{proof} Note that $\mathrm{Aut}_{K}(C) \cong \mathrm{Aut}_{B}(S)$ has finite order, and $|\mathrm{Aut}_{k}(B)|<75b^{4}$. By (5) and Lemma 3.3, we obtain (i) and (ii).

(iii) By (6), Lemma 3.3 and Theorem 2.6, we have 
\begin{equation}
|G|<75 \cdot g \cdot (g-1) \cdot (8g+4) \cdot b^{4} \leq 3000 (g-1)^3b^4.
\end{equation}

(iv) Applying Theorem 4.1 to $f\colon S \rightarrow B$. By Remark 4.2 and Corollary 4.5, we have $\mathrm{Aut}_{B}(S) \subseteq \mathrm{Aut}_{B_{1}}(\widetilde{S}_{1})$. Let $\widetilde{g}$ be the arithmetic genus of the generic fiber of $\widetilde{f}_{1}$. By Lemma 4.6 and the inequality that $\widetilde{g} \leq g$, we have 
\begin{equation}
|\mathrm{Aut}_{B}(S)| \leq |\mathrm{Aut}_{B_{1}}(\widetilde{S}_{1})| \leq 8(\widetilde{g}-1)(2\widetilde{g}+1)^2 \leq 8(g-1)(2g+1)^2.
\end{equation}
Hence $|G|<8(g-1)(2g+1)^2 \cdot 75 b^{4}=600 (g-1)(2g+1)^2 \cdot b^4$.  
\end{proof}

\begin{cor}
Let $f\colon S \rightarrow B$ be a fibration over an algebraically closed field $k$ of characteristic $p > 0$, where $S$ is a minimal smooth projective surface of general type and $B$ is a smooth projective curve of genus $b\geq 2$. Let $C$ be the generic fiber of $f$, and $g:=p_{a}(C)$. Let $K$ be the function field of $B$, and $\widetilde{C}$ the normalization of $C \times_{K}\overline{K}$. We obtain the following assertions.
\\(i) If $g(\widetilde{C}) \geq 2$, then $|G| < 15658 \cdot (K_{S}^{2})^{4}$.
\\(ii) If $g(\widetilde{C})=1$, then $|G|< 1391 \cdot (K_{S}^{2})^{4}$.
\\(iii) If $g(\widetilde{C})=0$ and $p \geq 5$, then $|G|< 5500 \cdot (K_{S}^{2})^{4}$.
\\ As a conclusion, one has $|G|<15658 \cdot (K_{S}^{2})^{4}$ if $p \geq 5$.
\end{cor}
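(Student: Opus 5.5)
Combine Theorem 4.7 with Lemma 2.7, which converts the two parameters $g$ and $b$ into powers of $K_S^2$. Write $x:=g-1\geq 1$ and $y:=b-1\geq 1$. The first inequality of Lemma 2.7 gives
\[
K_S^2\geq 2xy\Bigl(1+\tfrac1g\Bigr)\Bigl(1+\tfrac{g-1}{15g+1}\Bigr)\geq 2xy .
\]
Each bound in Theorem 4.7 has the shape (polynomial in $g$ of degree $\le 4$)$\,\cdot\, b^4$, so the task is to dominate $g^4b^4$, $(g-1)b^4$, $(g-1)^3b^4$ and $(g-1)(2g+1)^2b^4$ by constants times $(xy)^4$, up to the factor coming from Lemma 2.7.

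First step: record elementary comparisons valid for $g,b\geq 2$, namely $b\leq 2y$, $g\leq 2x$ and $2g+1\leq 5x$. The crude versions give $g^4b^4\leq 256(xy)^4$. Applied naively in case (i), this yields $|G|<5625\cdot 256(xy)^4\leq 5625\cdot 16\,(K_S^2)^4$, which is far larger than $15658(K_S^2)^4$.

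To do better I would not discard the factor $(1+\frac1g)(1+\frac{g-1}{15g+1})$. I would bound the ratio
\[
\frac{g^4b^4}{\bigl[2(g-1)(b-1)(1+\frac1g)(1+\frac{g-1}{15g+1})\bigr]^4}
\]
directly as a function of $g$ and $b$ separately. In $b$ it is maximized at $b=2$, giving $16$. In $g$ the factor $\frac{g^4}{(g-1)^4(1+1/g)^4}=\frac{g^8}{(g^2-1)^4}$ is largest at the smallest admissible $g$. Using $g\geq 2$, or better the fact that in case (i) $g\geq g(\widetilde C)\geq 2$ with equality forcing a smooth fiber, where the stronger second inequality of Lemma 2.7 applies, the constant should come out as $5625\cdot 16\cdot\max_g(\cdots)/16\approx 15658$. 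I would therefore split case (i) into smooth fibers and non-smooth fibers. For smooth fibers I use the $4(g-1)(b-1)(\cdots)$ bound, which gains $2^4$. For non-smooth fibers, Tate's genus drop gives $g\geq g(\widetilde C)+\frac{p-1}{2}\geq 4$ when $p\geq 5$, which shrinks the $g$-factor. The maximum of the two resulting constants should be $15658$.

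Cases (ii) and (iii) are handled the same way. The polynomials in $g$ have lower degree, so powers of $K_S^2\geq 2$ absorb the deficit, using $(g-1)\leq (g-1)^4$ and $K_S^2\geq 2xy\geq 2$. In case (ii) or (iii) the fiber is singular, so Tate again gives $g-1\geq\frac{p-1}{2}\geq 2$, which sharpens the constants to $1391$ and $5500$. For case (iii) of the corollary, i.e.\ $g(\widetilde C)=0$, I take the maximum of Theorem 4.7(iii) and (iv). I would bound $(g-1)(2g+1)^2\cdot 600$ by $600\cdot\frac{(2g+1)^2}{(g-1)^2}(g-1)^3$ with $g\geq 3$, which gives at most $600\cdot\frac{49}{4}(g-1)^3$, and then divide by the Lemma 2.7 factor.

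The main obstacle is purely numerical: choosing the right lower bound on $g$ in each subcase and optimizing the rational function of $g$ so that the constants land exactly at $15658$, $1391$ and $5500$. I would handle this by checking monotonicity of each ratio in $g$ and evaluating at the minimal admissible $g$ and $b=2$. The final statement then follows by taking the maximum of the three constants.
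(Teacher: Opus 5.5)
Your main route is the paper's: put the bounds of Theorem 4.7 against the first inequality of Lemma 2.7, and maximise the resulting rational functions using $b/(2(b-1))\le 1$. For (i), the evaluation at $g=b=2$ you describe already gives $5625\,(31/24)^4\approx 15657.6$, which is exactly the paper's constant. The smooth/non-smooth split is therefore unnecessary. It would in fact give constants of roughly $858$ and at most $7600$, not $15658$. Its Tate input also needs $p\geq 5$, which (i) does not assume.

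\textbf{The gap is in (iii).} You use $g\geq 3$, derived from ``Tate gives $g-1\geq\frac{p-1}{2}\geq 2$''.
\begin{itemize}
\item In (iii) one has $g(\widetilde C)=0$, so Tate only gives $\frac{p-1}{2}\mid g$. For $p=5$ this means only $g\geq 2$.
\item The value $g=2$ does occur. It is the case $r=2$ in Remark 4.3, where $p_a(C_1)=\frac12(p-1)(r-1)=2$.
\item At $g=2$ your factor $\frac{(2g+1)^2}{(g-1)^2}$ equals $25$, not $\frac{49}{4}$.
\item Even granting $g\geq 3$, finishing as in (ii) with $(g-1)^3b^4\le (b(g-1))^4<(\frac{15}{16}K_S^2)^4$ gives $7350\,(15/16)^4\approx 5678>5500$. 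So the step as written does not reach $5500$.
\end{itemize}

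The repair needs only $g\geq 2$. There are two ways to do it:
\begin{itemize}
\item Argue as the paper does. Use $(g-1)(2g+1)^2b^4\le (b(g-1))^2(b(2g+1))^2$ together with $b(g-1)<\frac{15}{16}K_S^2$ and $b(2g+1)\le\frac{155}{48}K_S^2$, both valid for all $g\geq 2$. This gives $600\,(15/16)^2(155/48)^2\approx 5499$.
\item Keep the factor $\frac{1}{g-1}$ and bound $600\,\frac{(2g+1)^2}{(g-1)^3}(\frac{15g+1}{16(g+1)})^4$ directly. This is maximal at $g=2$ and gives about $2610$.
\end{itemize}
The $t\geq 2$ bound $300g(g-1)(2g+1)b^4$ is dominated by the $t=1$ bound, so this settles (iii).

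\textbf{Case (ii).} Likewise, no lower bound on $g$ is available here, since $p\geq 5$ is not assumed. Nor would one help. After $(g-1)\le (g-1)^4$, the ratio $(\frac{15g+1}{16(g+1)})^4$ is increasing in $g$. The constant $1391$ comes from its supremum as $g\to\infty$, namely $1800\,(15/16)^4\approx 1390.5$, not from evaluating at the smallest admissible $g$.
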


\begin{proof}  
Note that 
\begin{equation}
\begin{aligned}
\frac{gb}{2(g-1)(b-1)(1+\frac{1}{g})(1+\frac{g-1}{15g+1})}&\leq \frac{g}{(g-1)(1+\frac{1}{g})(1+\frac{g-1}{15g+1})}\\
&=\frac{15g^2+g}{16(g^2-1)}
\end{aligned}
\end{equation}
and the function $h(x):=\frac{15x^2+x}{x^2-1}$ is decreasing in the interval $[2,\infty)$, so we obtain
\begin{equation}
\frac{gb}{2(g-1)(b-1)(1+\frac{1}{g})(1+\frac{g-1}{15g+1})} \leq \frac{15 \cdot 4+2}{16 \cdot 3}=\frac{31}{24}.
\end{equation}
Combining this with Lemma 2.7, we have $gb\leq \frac{31}{24} K_S^2$. 

Similarly, we obtain
$$
\begin{aligned}
\frac{b(g-1)}{2(g-1)(b-1)(1+\frac{1}{g})(1+\frac{g-1}{15g+1})}\leq \frac{1}{(1+\frac{1}{g})(1+\frac{g-1}{15g+1})}=\frac{15g+1}{16(g+1)}<\frac{15}{16},
\end{aligned}
$$
$$
\begin{aligned}
\frac{b(2g+1)}{2(g-1)(b-1)(1+\frac{1}{g})(1+\frac{g-1}{15g+1})}&\leq \frac{2g+1}{(g-1)(1+\frac{1}{g})(1+\frac{g-1}{15g+1})}\\
&=\frac{(2g+1)(15g+1)}{16(g-1)(g+1)}\\
&\leq \frac{155}{48}.
\end{aligned}
$$
According to Lemma 2.7, we have $b(g-1)<\frac{15}{16} K_S^2$ and $b(2g+1)\leq \frac{155}{48} K_S^2$.
Therefore, by Theorem 4.7, we obtain the following assertions.

(i) If $g(\widetilde{C}) \geq 2$, then $|G| < 5625g^4b^4 \leq 5625\cdot (\frac{31}{24})^4(K_{S}^{2})^{4}<15658(K_{S}^{2})^{4}$.

(ii) If $g(\widetilde{C})=1$, then 
$$
|G|<1800 (g-1)b^4 \leq 1800 (g-1)^4b^4<1800\cdot (\frac{15}{16})^4(K_{S}^{2})^{4}<1391(K_{S}^{2})^{4}.
$$

(iii) If $g(\widetilde{C})=0$ and $p \geq 5$, then 
$$
|G|<600 (g-1)(2g+1)^2b^4 < 600 \left(\frac{15}{16}\right)^2 \left(\frac{155}{48}\right)^2(K_{S}^{2})^{4} <5500(K_{S}^{2})^{4}.
$$

\end{proof}

Next, we provide an example to explain that the exponent $4$ of the polynomial bound (in terms of $K_S^2$) is sharp.

\begin{example}
Let $k$ be an algebraically closed field of characteristic $p>0$, and let $B$ be the plane curve over $k$ defined by
$$
y^{p^n}z+yz^{p^n}=x^{p^n+1}\qquad (p^n\ge 3).
$$
Then $g(B)=\frac{1}{2}p^{n}(p^{n}-1)$ and $|\mathrm{Aut}_k(B)|=p^{3n}(p^{3n}+1)(p^{2n}-1)$ (see \cite{s13}). Let $S:=B \times B$. Then $S$ is a minimal smooth surface of general type. Let $p_{1} \colon B\times B \to B$ be the first projection.

Let $G:=\{ (\sigma_1,\sigma_2) \in \mathrm{Aut}_k(B) \times \mathrm{Aut}_k(B)$ $|$ $p_{1} \circ (\sigma_1,\sigma_2)=\sigma_{1}\circ p_{1} \}$.

\begin{displaymath}
\xymatrix{
S\ar[r]^{(\sigma_1,\sigma_2)} \ar[d]_{p_{1}} & S \ar[d]^{p_{1}}\\
B\ar[r]_{\sigma _{1}} & B 
}
\end{displaymath}
Then we have 
$$
\begin{aligned}
|G|
&=|\operatorname{Aut}_k(B)|^2
=p^{6n}(p^{3n}+1)^2(p^{2n}-1)^2
\sim p^{16n}\quad (n\to\infty),\\
K_S^2
&=8(g(B)-1)^2
=2(p^n-2)^2(p^n+1)^2
\sim 2p^{4n}\quad (n\to\infty).
\end{aligned}
$$
Therefore, in order to bound $|G|$, it is necessary to use at least the fourth power of $K_S^2$. In fact, ${1 \over 16} (K_{S}^{2})^{4}<|G|<35(K_{S}^{2})^4$.
\end{example}


\section*{Acknowledgements}
I would like to thank my advisor Prof. Wenfei Liu for suggesting this problem and for lots of discussions and encouragement. I would like to thank Prof. Qing Liu for very useful suggestions and generous help.


\end{document}